\documentclass[final,3p,times]{elsarticle}
\usepackage{graphicx}
\usepackage{algorithm}
\usepackage{times, latexsym, amsmath, verbatim, graphics, psfont, epsfig, amssymb, color}
\usepackage{algpseudocode}
\usepackage{epstopdf}
\usepackage{hyperref,csquotes}
\usepackage{natbib}  
\usepackage{subcaption}   
\usepackage{float}        
\usepackage{url}

\biboptions{sort&compress}
\usepackage{adjustbox}
\usepackage{pdflscape}
\usepackage{algpseudocode}
\usepackage{longtable}
\usepackage{multirow}
\usepackage{booktabs}
\usepackage{mathrsfs}
\usepackage[figuresright]{rotating}
\newtheorem{theorem}{Theorem}[section]
\newtheorem{lemma}[theorem]{Lemma}

\newtheorem{remark}[theorem]{Remark}

\usepackage{proof}
\begin{document}
\begin{frontmatter}
	
\title{A Novel Fractional-Order Accelerated Gradient Descent Method for Nonlinear Optimization with Application to Posture Recognition}


\author[iit]{Barsha Shaw}
\ead{shawbarsha307@gmail.com}

\author[iit]{Md Abu Talhamainuddin Ansary\corref{cor1}}
\ead{md.abutalha2009@gmail.com}

\author[ktu]{Soundararajan Ganesan}
\ead{soundarmaths001@gmail.com}

\author[ktu]{Minvydas Ragulskis 
}
\ead{minvydas.ragulskis@ktu.lt}

\cortext[cor1]{Corresponding author}

\address[iit]{Department of Mathematics, Indian Institute of Technology, Jodhpur-342303, India.}
\address[ktu]{Department of Mathematical Modelling, Kaunas University of Technology, Kaunas LT-51368, Lithuania.}
\begin{abstract} {This article proposes a Caputo fractional accelerated gradient descent (CFAGD) method for unconstrained optimization problems that is applicable to both smooth and a class of non-smooth objective functions. The proposed approach incorporates an adaptive $(\beta^k)$-parameter, which is heuristically updated throughout the iterative process to improve the search direction. Furthermore, the method employs the Caputo fractional derivative together with the adaptive $(\beta^k)$-parameter, thereby preserving the memory characteristics associated with non-integer-order derivatives.} A suitable step-size is selected via an inexact line-search technique based on the Armijo condition. The central idea is to scale the step-size by a positive parameter to improve the behavior of the iterates as they approach an optimal point, thereby generating a descent sequence. {Under strong convexity and bounded Hessian assumptions, linear convergence of the proposed method is established.} Numerical validations, including neural-network-based examples, further indicate that the CFAGD method can achieve faster and more stable performance than competing approaches.
\end{abstract}
\begin{keyword}
 Unconstrained optimization, accelerated gradient descent method, Caputo fractional derivative, neural networks.
\medskip\\
\noindent\textbf{Mathematics Subject Classification (2020):}
26A33, 65K05, 90C30, 49M05
\end{keyword}
 \end{frontmatter}
\section{Introduction}
 \hspace*{0.2cm} Optimization is a core area of applied mathematics and arises in a wide range of disciplines. In many practical situations, an optimization problem is formulated as the minimization or maximization of a real-valued objective function. Optimization techniques are used in several real-world decision-making problems, including logistics and transportation planning, healthcare resource allocation, neurodynamic approaches, financial portfolio selection, agricultural planning, energy management, manufacturing scheduling, communication networks, and machine-learning model training \cite{wang2025sodas,wei2025hessian,zhao2025inertial,wang2025interactive,chen2025scalable}.  {Robust optimization techniques are a major focus in theoretical optimization and play an important role in intelligent transportation systems, autonomous vehicles, and various engineering  applications. Liu et al. \cite{liu2021trajectory} studied vehicle trajectory prediction by considering lane-crossing behavior, final-point generation, and driving styles. Their work shows that dynamic tracking models often involve complex nonlinear and non-smooth optimization problems, thereby motivating the need for efficient gradient descent-type methods.} In these applications, the aim is to improve efficiency, accuracy, productivity, and overall performance under practical constraints \cite{portfolio,supply}.

One important category of optimization problems is unconstrained optimization. In the literature, nonlinear unconstrained optimization problems have been studied using various iterative techniques. Among these methods, gradient descent is one of the earliest and most commonly used approaches. In this method, a sequence of iterates is generated by moving along the negative gradient direction. Since this direction gives the steepest local decrease of the objective function for minimization problems, the method is also known as the steepest descent method \cite{steepest1,Nocedal}. Although steepest descent performs well for well-conditioned objective functions, its convergence can be slow for poorly conditioned problems. Therefore, the selection of a suitable step-size is important for improving the efficiency of the method. Exact line search determines an optimal step-size along a given search direction. However, this procedure is often computationally expensive. For this reason, inexact line-search techniques, such as the Armijo backtracking rule, are widely used in practice \cite{Barsha2,Barsha3}. These techniques select step-sizes that ensure a sufficient decrease in the objective value and also prevent excessively large steps. Another way to improve the performance of gradient-based methods is through acceleration. In accelerated gradient methods, a positive scaling parameter is combined with the step-size to produce a modified search step. This modification can lead to a better decrease in the objective value while preserving suitable convergence behavior under appropriate assumptions. Related studies can be found in \cite{Acce1,Acce2,Acce3,J1,J2,SAGD1,SAGD2,zhao2024adaptive}.

Most classical gradient-based methods are mainly developed for smooth objective functions because they require first-order derivative information. Therefore, difficulties arise when the objective function is non-smooth and the classical gradient does not exist at some points. To deal with such problems, generalized first-order methods such as sub-gradient methods and proximal-type methods have been developed. In particular, steepest proximal descent methods are used for non-smooth optimization problems. Proximal acceleration techniques have also been proposed to improve the convergence behavior of these methods. Such approaches are useful for composite optimization problems of the form
where $f$ is smooth and $g$ may be non-smooth \cite{Proximal-Acce1,Proximal-Acce2}. Proximal and sub-gradient-based methods have been applied in several areas, including machine learning, signal processing, image reconstruction, and statistical learning \cite{App1, App2}.  {For example, in neural-network training, non-smooth activation functions such as ReLU, defined by $\phi(t)=\max\{0,t\}$, create points where the classical derivative is not defined. At such points, sub-gradient-based interpretations are commonly used. However, the sub-gradient may not be unique at non-differentiable points, and the resulting updates can be sensitive to the chosen sub-gradient and step-size. This may lead to slow or unstable progress near activation switching regions. These difficulties motivate the study of fractional-gradient-based methods as an alternative first-order framework.}

A further approach for treating non-smooth optimization problems is to use non-integer-order derivatives, commonly known as fractional derivatives, instead of relying only on classical integer-order derivatives.  {Fractional calculus provides derivative operators that are defined through integral expressions and therefore can incorporate non-local information and memory effects, which provide valuable insights into neural network dynamics, control synthesis, and more engineering applications \cite{hongguang2026exploring, zhou2026novel}. In optimization, this memory property allows the search direction to exploit not only the current local information of the objective function but also the historical information accumulated throughout the optimization process.} In contrast, classical integer-order derivatives are purely local. As a result, classical gradient-type methods such as steepest descent and conjugate-gradient methods may suffer from slow progress or zigzag-type behavior, especially for ill-conditioned problems. These considerations have motivated the use of fractional derivatives in optimization.

Among the commonly used fractional derivative operators, the Caputo and Riemann--Liouville (R--L) derivatives are widely studied. Standard definitions of the R--L derivative $(^{RL}_{a}D_{x}^{\alpha})$ and the Caputo derivative $(^{C}_{a}D_{x}^{\alpha})$ are given in \cite{20}. Monotonicity properties of these derivatives on the interval $(0,1)$ were established in \cite{20} and later extended to generalized intervals $(n,n+1), n\in\mathbb{N}$, in \cite{Barsha1}. Further theoretical developments in fractional calculus can be found in \cite{23}. In addition, fractional Taylor-type expansions, Taylor--Riemann series, and mean value theorems for multivariate functions involving the Caputo derivative were developed in \cite{MVT,27}. Based on these tools, a Caputo-type fractional gradient descent method (CFGD) was proposed in \cite{27}.   {In the present work, the Caputo derivative is chosen because it is more compatible with the classical first-order optimality framework used in gradient-based optimization. In particular, the Caputo derivative of a constant is zero, and the modified Caputo fractional gradient employed here is connected with the classical gradient through a Taylor-type expansion. In contrast, the R--L derivative may contain lower-limit-dependent singular terms, which makes the formulation of a classical stationarity condition less direct.} Further motivation for using Caputo derivatives in steepest-descent-type methods was provided in \cite{Barsha2}.

Although fractional-gradient methods have shown useful behavior in nonlinear optimization, reducing the computational cost and the number of iterations remains an important issue. Acceleration techniques are commonly used to improve the practical performance of gradient-type methods. However, an accelerated gradient framework based on Caputo fractional derivatives is still less explored, particularly for optimization problems that include both smooth and certain non-smooth objective structures. There is also scope to examine such methods in practical learning problems; related developments in neural-network and stochastic-gradient settings can be found in \cite{Neural1,Neural2, stochastic}.
 {Although CFGD methods have been studied for nonlinear optimization, the use of fractional Taylor-based scaling within an accelerated descent framework is still limited. In particular, existing Caputo fractional gradient descent schemes mainly focus on basic descent updates, while the effect of combining a Caputo fractional gradient, an acceleration scaling parameter, and an Armijo-type line search has not been sufficiently explored. The proposed method employs a local finite-memory form of the modified Caputo gradient and therefore does not require storing all previous gradients or iterates. Consequently, the memory requirement remains independent of the iteration number rather than growing as \(\mathcal{O}(k)\) memory at the $k$-th iteration.} Motivated by the successful application of Caputo fractional derivative concepts in nonlinear optimization, the present work investigates a Caputo fractional accelerated gradient descent method as an efficient framework for obtaining improved search directions and enhancing the convergence behavior of first-order optimization schemes \cite{Barsha2,Barsha3}.  {The novelty of the proposed method lies in combining the modified Caputo fractional gradient with a fractional Taylor-based scaling parameter and an Armijo-type line search into a single descent framework.}
The main contributions are summarized as follows:
\begin{itemize}
    \item An accelerated gradient descent scheme for non-linear optimization is developed by employing a Caputo fractional gradient.
    \item  {An explicit algorithm is proposed, and its convergence properties are established under strong convexity and bounded Hessian assumptions.} (see Theorems~\ref{theorem1} and \ref{theorem2}).
    
    \item The proposed method is evaluated through numerical experiments, including neural-network examples, with ideas adapted from \cite{Nature,sou2023}.
    
\end{itemize}

\par  {
The remainder of this paper is organized as follows: Section~\ref{sec1} presents the required preliminaries, Section~\ref{sec2} develops the proposed CFAGD method, Section~\ref{sec3} establishes its convergence properties, Section~\ref{sec4} reports the numerical results, and Section~\ref{sec6} concludes the paper with possible directions for future work.}

\section{Preliminaries}\label{sec1}
Consider the unconstrained optimization problem
\[
(P): \quad \min_{x\in\mathbb{R}^n}~~ f(x),
\]
where $f:\mathbb{R}^n\to\mathbb{R}$ is a continuous function. A classical method for solving $(P)$ is the steepest descent method. In this method a sequence $\{x^k\}$ is generated with an initial approximation $x^0$ and update formula $x^{k+1}=x^k+\eta_k d^k$. Here $d^k=-\nabla f(x^k)$ is a descent direction of $f$ at $x^k$, and step-size $\eta_k$ is selected using some exact/inexact line search technique. A widely used efficient inexact line search technique is the Armijo backtracking line search. A major limitation of the steepest-descent method is its slow (linear) rate of convergence. Therefore, the steepest descent method is modified further in different directions. In the case of Newton, quasi-Newton, and conjugate-gradient methods, descent direction is computed with the help of higher order information of the objective function. On the other hand, accelerated gradient descent methods, step-size $\eta_k$ is chosen in a different way to improve the decrease in the objective function. The modified update formula for the accelerated gradient descent method used in \cite{Acce1} is  $x^{k+1}=x^k+\theta_k\eta_k d^k$ where $\theta_k>0$ is used to improve the performance of the gradient descent method. However, these modifications are restricted to smooth problems only.

 \par To handle smooth functions as well as certain non-smooth functions,  {we work with the function class \(A^n[a,b]\), which consists of all functions that are \(n\)-times differentiable on \([a,b]\) and whose \((n+1)\)-st derivative is absolutely continuous on \([a,b]\), where $n$ is a non-negative integer. Then, for \(f\in A^n[a,b]\), the Caputo fractional derivative of order \(\alpha\in(n,n+1)\) at \(x\in[a,b]\) is defined as
\begin{equation*}
{}^{C}_{c}D^{\alpha}_{x} f(x)
= \frac{1}{\Gamma(n-\alpha)}\int_{c}^{x}(x-\tau)^{\,n-\alpha-1} f^{(n)}(\tau)\,d\tau.
\end{equation*}}
\par For a multivariate continuous function, the fractional gradient \cite{27} is defined, for $c\in\mathbb{R}^{n}$, as
\begin{equation*}
{}^{C}_{c}\bar{\nabla}^{\alpha}_{x} f(x)
=
\Big(
{}^{C}_{c_1}D^{\alpha}_{x_1} f(x),\,
{}^{C}_{c_2}D^{\alpha}_{x_2} f(x),\,
\dots,\,
{}^{C}_{c_n}D^{\alpha}_{x_n} f(x)
\Big)^{\top}.
\end{equation*}

\par When one relies only on the Caputo fractional Taylor series, the above fractional-gradient expression
does not directly yield a usable optimality condition. To formulate an optimality condition, a modification
of the Taylor-type expansion is therefore required. We recall the following modified Taylor formulation of
the Caputo fractional derivative.

 {\begin{lemma}\label{thm1}(\cite[Theorem 3.2]{27})
Let $f_{\alpha,\beta'}:\mathbb{R}^n\to\mathbb{R}$, with $\alpha\in(0,1)$ and $\beta=(\beta_1,...,\beta_n)\in\mathbb{R}^n$.
Assume that $f_{\alpha,\beta}$ admits the following second--order expansion about a point $\beta_1\in\mathbb{R},c\in\mathbb{R}^n$:
\begin{equation}\label{Taylor1}
f_{\alpha,\beta_1}(x)
= f(c) + \nabla f(c)^{\top}(x-c)
+ \frac{1}{2}\,C_{\alpha,\beta_1}\,(x-c)^{\top}H(\xi)\,(x-c),
\end{equation}
where
\[
C_{\alpha,\beta_1}=\frac{\Gamma(2-\alpha)\Gamma(2)}{\Gamma(3-\alpha)}+\beta_1,
\]
and $H(\xi)$ denotes the Hessian of $f_{\alpha,\beta}$ in the classical derivative sense, evaluated at some
point $\xi$ on the line segment joining $c$ and $x$.
Moreover, the classical gradient at $c$ coincides with the Caputo gradient at $c$.
Define the Caputo fractional gradient by
\begin{equation} \label{cfdk}
  {}^{C}_{c}\nabla^{\alpha}_{x} f(x)
=
\operatorname{diag}\!\Big(1+|\beta|({}^{C}_{c}\nabla^{\alpha}_{x} I(x)\Big)^{-1}
\left(
{}^{C}_{c}\nabla^{\alpha}_{x} f_{\alpha,\beta}(x)
+\operatorname{diag}(\beta|x-c|)\,{}^{C}_{c}\nabla^{1+\alpha}_{x} f_{\alpha,\beta}(x)
\right),  
\end{equation}
where $I:\mathbb{R}^n\to\mathbb{R}^n$ is the identity mapping and $\beta\in\mathbb{R}^n$.
\end{lemma}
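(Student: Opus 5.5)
The plan is to treat the result as a statement about one coordinate and then assemble the coordinates. I would first compute the Caputo derivatives of quadratic polynomials of one variable, then use the defining formula \eqref{cfdk} to show that the correction term $\operatorname{diag}(\beta|x-c|)\,{}^{C}_{c}\nabla^{1+\alpha}_{x}$ and the normalising factor $\operatorname{diag}(1+|\beta|\,{}^{C}_{c}\nabla^{\alpha}_{x}I(x))^{-1}$ cancel the power-law factors $|x_i-c_i|^{1-\alpha}/\Gamma(2-\alpha)$ produced by the fractional operator. The order is: (i) a one-dimensional power-function computation; (ii) identification of the constant $C_{\alpha,\beta_1}$; (iii) the multivariate assembly via the mean-value form of the classical Taylor theorem; (iv) the limit $x\to c$, giving the coincidence of gradients.

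For step (i), I would use the standard identity
\[
{}^{C}_{c}D^{\alpha}_{x}(x-c)^{p}=\frac{\Gamma(p+1)}{\Gamma(p+1-\alpha)}(x-c)^{p-\alpha},\qquad p\ge 1,\ \alpha\in(0,1),
\]
which follows from the substitution $\tau=c+s(x-c)$ and the Beta integral $B(p,1-\alpha)$. Applying it to the classical second-order expansion of $f$ about $c$ along each coordinate gives, for the constant, linear and quadratic parts respectively, the factors $0$, $(x-c)^{1-\alpha}/\Gamma(2-\alpha)$ and $\Gamma(3)(x-c)^{2-\alpha}/\Gamma(3-\alpha)$. Step (ii) consists of multiplying through by the normalising factor $\Gamma(2-\alpha)|x-c|^{\alpha-1}$ coming from ${}^{C}_{c}\nabla^{\alpha}_{x}I$. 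The linear term then returns exactly $\nabla f(c)^{\top}(x-c)$. The quadratic term acquires the ratio $\Gamma(2-\alpha)\Gamma(2)/\Gamma(3-\alpha)$, up to the factor-of-two bookkeeping between $\Gamma(3)$ and the $\tfrac12$. Adding the contribution of the $\beta$-weighted $(1+\alpha)$-order term, which carries the factor $|x-c|$ and has order equal to that of the quadratic term, produces the additive $\beta_1$. This yields $C_{\alpha,\beta_1}$ and defines $f_{\alpha,\beta_1}$ as the function reconstructed (integrated back) from this modified gradient.

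For step (iii), I would invoke the classical Lagrange remainder along the segment $[c,x]$. This replaces the second derivatives by $H(\xi)$ for some $\xi$ on that segment and gives \eqref{Taylor1}. For step (iv), I would let $x\to c$ in \eqref{cfdk}. Since $|x-c|\to0$, the $\beta$-term vanishes, and the normalised $\alpha$-derivative tends to $f'(c)$ coordinatewise, because the quadratic contribution is $O(|x-c|)$. Hence ${}^{C}_{c}\nabla^{\alpha}_{x}f(c)=\nabla f(c)$.

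The main obstacle is step (ii) in the multivariate setting. The Caputo operator acts coordinatewise, whereas $H(\xi)$ has mixed entries, so the single scalar $C_{\alpha,\beta_1}$ does not obviously multiply the full quadratic form. My first attempt would be to verify the identity exactly for quadratic $f$. In that case $H$ is constant and the mixed terms $x_ix_j$ are linear in each variable separately, so the mixed and diagonal terms scale alike. I would then pass to general $f$ by the mean-value argument, absorbing any discrepancy into the choice of $\xi$. If this absorption fails, I would restrict the claim to the form stated in \cite[Theorem 3.2]{27}, namely a Taylor-type expansion with an intermediate point, rather than an exact identity.
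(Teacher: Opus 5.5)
The paper gives no proof of this lemma; it is quoted from \cite[Theorem 3.2]{27}. In the statement, the expansion \eqref{Taylor1} is a \emph{hypothesis} (``Assume that $f_{\alpha,\beta}$ admits\dots''), so the only assertion to prove is that the Caputo gradient at $c$ agrees with $\nabla f(c)$.

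\textbf{The derivation in steps (ii)--(iii) fails for $n\ge 2$.} Your steps (i)--(iii) instead try to \emph{derive} \eqref{Taylor1} from the coordinatewise Caputo operator, and the repair you propose does not work. The error is already in step (ii). Along coordinate $i$ the other variables are frozen at $x_j$, not at $c_j$. So the ``linear term'' that survives normalisation is $\partial_i f$ evaluated at $x$ with its $i$-th entry replaced by $c_i$, not $\partial_i f(c)$. Expanding that about $c$ produces the off-diagonal Hessian entries with coefficient $1$. Only $\partial_{ii}f$ acquires the factor $\Gamma(2-\alpha)\Gamma(2)/\Gamma(3-\alpha)$, and the $\beta$-term also involves only $\partial_{ii}f$.

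Hence your claim that for quadratics ``the mixed and diagonal terms scale alike'' is wrong. For $f(x)=\tfrac12x^\top Ax+b^\top x$, with $x>c$ componentwise and all $\beta_i=\beta_1$,
\[
{}^{C}_{c}\nabla^{\alpha}_{x}f(x)=\nabla f(c)+\bigl(A-(1-C_{\alpha,\beta_1})D_A\bigr)(x-c),
\]
where $D_A$ is the diagonal part of $A$.
\begin{itemize}
\item For $f(x)=x_1x_2$ and $c=0$, the fractional gradient is $(x_2,x_1)^\top$ for every $\alpha,\beta$. So \eqref{Taylor1} would force $C_{\alpha,\beta_1}=1$, and since $H$ is constant, no choice of $\xi$ can absorb the discrepancy.
\item ``Integrating back'' does not define $f_{\alpha,\beta_1}$ in general. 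The $(i,j)$ Jacobian entry of the coordinatewise fractional gradient averages $\partial_{ij}f$ along the $i$-th coordinate segment, while the $(j,i)$ entry averages it along the $j$-th. These differ: for $f=x_1^2x_2^2$, $c=(1,0)$ and $x>c$, they differ by $4x_2(1-C_{\alpha,\beta_1})$. So the field is not conservative.
\end{itemize}
Steps (ii)--(iii) are therefore valid only for a diagonal Hessian. Since the statement assumes \eqref{Taylor1}, you should drop them rather than repair them.

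\textbf{What survives is step (iv).} Base it on an exact identity rather than on the expansion. Substitute $\tau=c_i+s(x_i-c_i)$ (for $x_i>c_i$, with $e_i$ the $i$-th unit vector):
\[
\frac{{}^{C}_{c_i}D^{\alpha}_{x_i}f(x)}{(x_i-c_i)^{1-\alpha}/\Gamma(2-\alpha)}=(1-\alpha)\int_0^1(1-s)^{-\alpha}\,\partial_i f\bigl(x+(s-1)(x_i-c_i)e_i\bigr)\,ds .
\]
\begin{itemize}
\item This is a weighted average with total weight $1$, so it tends to $\partial_i f(c)$ as $x\to c$ whenever $f\in C^1$ near $c$.
\item The $\beta$-term equals $\beta_i|x_i-c_i|$ times the analogous average of $\partial_{ii}f$, so it vanishes if $\partial_{ii}f$ is locally bounded.
\item The quotient is $0/0$ at $x=c$, so the claim must be read as this limit.
\end{itemize}

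\textbf{The normalisation must be stated.} The argument uses the prefactor $\operatorname{diag}({}^{C}_{c}\nabla^{\alpha}_{x}I(x))^{-1}$, which you adopted tacitly. The prefactor printed in \eqref{cfdk} has an unbalanced parenthesis, and neither reading works:
\begin{itemize}
\item Read as $\operatorname{diag}(1+|\beta|\,{}^{C}_{c}\nabla^{\alpha}_{x}I(x))^{-1}$, the limit is $0$.
\item Read as $\operatorname{diag}((1+|\beta|)\,{}^{C}_{c}\nabla^{\alpha}_{x}I(x))^{-1}$, the limit is $\partial_i f(c)/(1+|\beta_i|)$, which equals $\partial_i f(c)$ only when $\beta_i=0$.
\end{itemize}
You must state explicitly which definition you are proving the result for.
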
}

\par Although a multivariate mean value theorem for the Caputo fractional derivative is available
(see Theorem~15 in \cite{MVT}) and can be derived from a Taylor-type expansion, in this article we adopt
a \emph{modified} Caputo fractional derivative obtained from the classical Taylor series expansion.
With this modification, the value of the modified Caputo fractional derivative coincides with the value of
the classical derivative.

\section{An Accelerated Gradient Descent Method using Caputo Fractional Operator}\label{sec2}
This section presents a Caputo fractional accelerated gradient descent algorithm to solve $(P)$. One can observe that ${}^{C}_{c}\nabla_{x}^{\alpha} f(x^{k})$ defined in (\ref{cfdk}) uses the parameter $\beta\in\mathbb{R}^n$. Since the parameter $\beta$ affects the Caputo fractional gradient and therefore the search direction, keeping it fixed during the whole iteration process may not accurately reflect the changing local behaviour of the objective function.  {To overcome this, we update $\beta^k$ adaptively in a coordinate-wise manner using the displacement $x_i^{k+1}-x_i^k$.} In this method, using some $\beta^0\in\mathbb{R}^n$, a sequence $\{\beta^k\}$ is generated by the update formula for component $i$: \[
\beta_i^{k+1}=\begin{cases}
    \beta_i^k+0.1,& if~x_i^{k+1}- x_i^k\geq 0.5\\
    \beta_i^k-0.1,&if~ x_i^{k+1}- x_i^k \leq -0.5\\
    \beta_i^k, & otherwise.
\end{cases}
\] 
 {\begin{remark} The adaptive update of $\beta^k$ is used as a practical heuristic to adjust the fractional-gradient parameter during the iteration. The constants $0.1$ and $0.5$ are not claimed to be theoretically optimal.
To illustrate the practical role of the adaptive $\beta$ update, consider the quadratic function
\begin{equation*}
f(x)=4x_2^2+x_1^2+x_1x_2-2x_1-3x_2,
\end{equation*}
where $x=(x_1,x_2)^T$. For this function, the exact minimizer is $x^*\approx (0.86667,0.26667)^T,$ and the corresponding minimum value is $f(x^*)\approx -1.26667.$
When the fractional-gradient method is applied with a fixed $\beta$, the obtained point is $x=(0.79829,0.251271)^T,$
with function value $f(x)\approx -1.2599.$
On the other hand, when the proposed adaptive $\beta$ update is used, the method gives $
x=(0.86362,0.263382)^T,$ with function value $f(x)\approx -1.26642.$
Thus, for this simple example, the adaptive $\beta$ update gives a point closer to the exact minimizer and a function value closer to the true minimum than the fixed-$\beta$ case. This example is intended only to motivate the practical use of the adaptive rule. It is not a theoretical proof that the constants used in the adaptive update are optimal.
\end{remark}}
Within this framework, a descent direction at $x^k$ is obtained by the following: 
\[
d^{k}=-{}^{C}_{c}\nabla_{x}^{\alpha} f(x^{k};\beta^k).
\]
Next, choose a step-size $\eta_{k}>0$ by the following inexact backtracking line search technique, 
\begin{equation*}\label{equation1}
f(x^{k}+\eta_{k} d^{k})
\leq
f(x^{k})+c_{1}\eta_{k}\,{}^{C}_{c}\nabla^{\alpha}_{x} f(x^k;\beta^k)^{\top}d^{k},
\qquad 0<c_{1}<1.
\end{equation*}
The following update formula is then employed by applying a scaled update parameterized by $\varsigma > 0$
\begin{equation*}
x^{k+1}=x^{k}+\varsigma\eta_{k} d^{k}.
\end{equation*}
The parameter $\varsigma$ is defined as described below to improve the behaviour of the Caputo fractional gradient descent algorithm.

Applying the Taylor series expansion of the Caputo fractional derivative \cite{Barsha3}, we derive
\begin{equation}\label{Taylor2}
    f(x^k +\varsigma \eta_k d^k) = f(x^k) + \varsigma \eta_k{}^{C}_{c}\nabla^{\alpha}_{x} f(x^k;\beta^k){^\top} d^k + \frac{1}{2} C_{\alpha,\beta} \varsigma^2 \eta_k^2 d^k{^\top} H(\xi;\beta^k) d^k.
\end{equation}
Thus, combining \eqref{Taylor1} and \eqref{Taylor2}, we obtain
\begin{equation}\label{equation3}
   f(x^k+ \varsigma \eta_k d^k) = f(x^k +\eta_k d^k) + \mathcal{A}_k(\varsigma),
\end{equation}
where 
\begin{equation}\label{equation4}
    \mathcal{A}_k(\varsigma) = -(1-\varsigma) \eta_k d^k{^\top} d^k -\frac{1}{2} (1-\varsigma^2)C_{\alpha,\beta} \eta_k^2 d^k{^\top} H(\xi) d^k.
\end{equation}
Throughout the article, we denote $-\eta_k d^k{^\top} d^k$ and $C_{\alpha,\beta} \eta_k^2 d^k{^\top} H(\xi) d^k$ of \eqref{equation4} by $a_k$ and $b_k$ respectively. One can observe that that $a_k\geq 0$ holds for any $f$ and $b_k\geq 0$ holds for convex functions. Therefore, $\mathcal{A}_k(\varsigma)$ admits the representation 
\begin{equation*}\label{equation5}
\mathcal{A}_k(\varsigma)
= (1-\varsigma)\, a_k - \frac{1}{2}\,(1-\varsigma^2)\, b_k .
\end{equation*}

Observe that $\mathcal{A}_k(\varsigma)$ is a quadratic function of $\varsigma$. In particular, if $b_k>0$,
then $\mathcal{A}_k$ is strictly convex and therefore admits a unique minimizer. Denote $\varsigma_m :=\operatorname*{arg\min}_{\varsigma\in\mathbb{R}} \, \mathcal{A}_k(\varsigma).$ Then clearly $\varsigma_m=a_k/b_k$ and $$\min_{\varsigma\in\mathbb{R}} \mathcal{A}_k(\varsigma)=\mathcal{A}_k(\varsigma_m)=\; -\,\frac{(a_k-b_k)^2}{2b_k}
\;\le\; 0.$$
  From \eqref{equation3}, it is obtain
    \begin{align*}
        f(x^k +\varsigma_m \eta_kd^k) = f(x^k+\eta_k d^k) -\frac{(a_k-b_k)^2}{2b_k}\leq f(x^k+\eta_k d^k).
    \end{align*}
Further, if $d^k$ is a descent direction and $a_k\neq b_k$ holds then there exists $\eta_k>0$ for which the following inequalities hold:
\begin{equation*}
f(x^{k}+\varsigma_m\eta_{k}d^{k}) < f(x^{k}+\eta_{k}d^{k}) < f(x^{k}).
\end{equation*}

Therefore, using the simple modification of step-size $\eta_k$ as $\varsigma_k\eta_k$ where $\varsigma_k= \varsigma_m=\frac{a_k}{b_k}$ we get
\begin{align}
    f(x^{k+1})& = f(x^k + \varsigma_k \eta_kd^k)\notag\\
    &\leq f(x^k) - c_1 \eta_k{}^{C}_{c}\nabla^{\alpha}_{x} f(x^k;\beta^k){^\top} d^k -\frac{(a_k-b_k)^2}{2b_k}\notag\\
    &= f(x^k) - \Big(c_1 a_k+ \frac{(a_k-b_k)^2}{2b_k}\Big)\leq f(x^k).\label{equation6}
\end{align}
We recall that if $x^{k}$ is not a minimizer of $f$, then there exists a direction $d^{k}$ along which $f$ decreases locally. 
In the proposed CFAGD scheme, the step-size $\eta_{k}$ is determined via an inexact line search. 
\begin{equation}\label{eq6}
     f(x^k +\varsigma_k \eta_k d^k) \leq f(x^k) + c_1\varsigma_k\eta_k{}^{C}_{c}\nabla^{\alpha}_{x} f(x^k;\beta^k){^\top} d^k.
\end{equation}
The following lemma establishes the existence of admissible step-sizes for the CFAGD update.

\begin{lemma}\label{lem:stepsize_lb}

Let $d^k$ be descent direction of the problem $(P)$ for some non-critical point. Then Inequality (\ref{eq6}) holds for all $\eta_k>0$ sufficiently small.
\end{lemma}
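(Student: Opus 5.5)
The plan is to follow the classical Armijo existence argument. The idea is to treat $\varsigma_k\eta_k$ as one effective step $t$ and apply a first-order expansion along $d^k$.

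Write $g^k:={}^{C}_{c}\nabla^{\alpha}_{x} f(x^k;\beta^k)$ and $d^k=-g^k$. By Lemma~\ref{thm1}, $g^k$ agrees with the classical gradient at the expansion point, so $g^{k\top}d^k=-\|d^k\|^2<0$ at a non-critical point. I would first fix the scaling. Since $\varsigma_k=a_k/b_k$ and $a_k,b_k$ depend on $\eta_k$, we have
\[
\varsigma_k=\frac{\|d^k\|^2}{C_{\alpha,\beta}\,\eta_k\, d^{k\top}H(\xi)d^k},
\]
so that $t_k:=\varsigma_k\eta_k=\|d^k\|^2/(C_{\alpha,\beta}d^{k\top}H(\xi)d^k)$. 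This $t_k$ does not shrink as $\eta_k\to0$. For this reason I would not try to send $t_k\to 0$. Instead I would show the inequality directly from the expansion \eqref{Taylor2}.

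Substituting $t=\varsigma_k\eta_k$ into \eqref{Taylor2} gives
\[
f(x^k+t d^k)=f(x^k)-t\|d^k\|^2+\tfrac12 C_{\alpha,\beta}t^2 d^{k\top}H(\xi)d^k .
\]
Then \eqref{eq6} is equivalent to
\[
\tfrac12 C_{\alpha,\beta}\,t\, d^{k\top}H(\xi)d^k\le (1-c_1)\|d^k\|^2 .
\]
Two cases follow.

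\emph{Generic small-step case.} If $t$ can be made small (for instance when $\varsigma_k$ is bounded, or in the paper's fallback where $b_k$ is tiny and $\varsigma_k$ is capped), I would use local boundedness of the Hessian on a neighbourhood of $x^k$, say $d^{\top}H(\xi)d\le M\|d\|^2$. The required inequality then holds whenever $t\le 2(1-c_1)/(C_{\alpha,\beta}M)$. This holds for all sufficiently small $\eta_k$ once $\varsigma_k$ stays bounded.

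\emph{Optimal-scaling case.} Plugging in $t_k=\|d^k\|^2/(C_{\alpha,\beta}d^{k\top}H(\xi)d^k)$, the left side becomes $\tfrac12\|d^k\|^2$. So the inequality holds precisely when $c_1\le 1/2$. Equivalently, one can use \eqref{equation6}: $f(x^k+\varsigma_k\eta_kd^k)\le f(x^k+\eta_kd^k)$, and the ordinary Armijo condition holds for small $\eta_k$ by the differentiability of $f$ and $g^{k\top}d^k<0$.

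The main obstacle is the dependence of $\xi$ on the step, together with the fact that $\varsigma_k\eta_k$ does not vanish with $\eta_k$. I would handle this by assuming $H$ is bounded on the sublevel set (the bounded Hessian assumption used later) and $b_k>0$, and by restricting $c_1\in(0,1/2]$ if needed. Under these assumptions the quadratic-model inequality above closes the argument.
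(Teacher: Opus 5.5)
Your ``generic small-step'' case is the paper's own argument. The paper writes $f(x^k+\varsigma_k\eta_kd^k)=f(x^k)+c_1\varsigma_k\eta_k\,g^{k\top}d^k+\mathcal{R}$, observes that $\mathcal{R}/(\varsigma_k\eta_k)\to(1-c_1)g^{k\top}d^k<0$, and concludes ``for any $\varsigma_k>0$''. In other words, it holds $\varsigma_k$ fixed while $\eta_k\to0$. Your threshold $t\le 2(1-c_1)/(C_{\alpha,\beta}M)$ is the corrected form of the bound displayed there, which has a spurious $\varsigma_k^2\eta_k^2$ in the denominator. Your remark that $\varsigma_k=a_k/b_k$ forces $\varsigma_k\eta_k=\|d^k\|^2/(C_{\alpha,\beta}\,d^{k\top}Hd^k)\not\to0$ is correct, and it is exactly what that argument ignores. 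Algorithm~\ref{algorithm1} has no cap on $\varsigma_k$; in fact $\varsigma_k\sim 1/\eta_k$. So case 1 never applies to the algorithm's $\varsigma_k$, and the whole proof rests on your second case.

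That case has a genuine gap. The computation ``the left side becomes $\tfrac12\|d^k\|^2$'' uses the same $\xi$ in $b_k$ and in the remainder of $f(x^k+t_kd^k)$, but these points differ. $b_k$ (whether from \eqref{Taylor1} at $x^k+\eta_kd^k$ or from $y_k$) evaluates $H$ at some $\xi_1$ within $\eta_k\|d^k\|$ of $x^k$. Expanding $f(x^k+t_kd^k)$ instead puts $\xi_2$ on $[x^k,x^k+t_kd^k]$, a segment that does not shrink as $\eta_k\to 0$. What you actually need is
\[
d^{k\top}H(\xi_2)\,d^k\le 2(1-c_1)\,d^{k\top}H(\xi_1)\,d^k .
\]
The bound $mI\preceq H\preceq MI$ only controls the ratio by $M/m$, so bounded Hessian, $b_k>0$ and $c_1\le\tfrac12$ do not close it. Nor can it be closed in general. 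Take one dimension with $C_{\alpha,\beta}H=\epsilon$ near $x^k$ and $=M$ elsewhere; this function is strongly convex with bounded Hessian. Then $t_k=1/\epsilon$ for every small $\eta_k$, and for small $\epsilon$ the step of length $|d^k|/\epsilon$ overshoots the minimizer so far that $f(x^k+t_kd^k)>f(x^k)$.

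Your fallback via \eqref{equation6} also fails. Chaining $f(x^k+\varsigma_k\eta_kd^k)\le f(x^k+\eta_kd^k)\le f(x^k)-c_1\eta_k\|d^k\|^2$ gives a decrease of order $\eta_k$. But \eqref{eq6} demands a decrease of $c_1\varsigma_k\eta_k\|d^k\|^2$, larger by the factor $\varsigma_k\to\infty$. Moreover, \eqref{equation3}, on which \eqref{equation6} rests, makes the same common-$\xi$ identification.

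Your case 2 is exact only for constant $H$ (quadratic $f$). There it yields \eqref{eq6} for every $\eta_k>0$ if and only if $c_1\le\tfrac12$, so restricting $c_1$ is a genuine change of hypothesis, not a convenience. Beyond that, what is provable is what the paper's proof actually shows: \eqref{eq6} holds for small $\eta_k$ when $\varsigma_k$ is held fixed, or bounded independently of $\eta_k$.
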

\textit{Proof:~}From \eqref{Taylor2}, we have
\begin{align*}
     f(x^k +\varsigma_k \eta_k d^k) &= f(x^k) + \varsigma_k \eta_k{}^{C}_{c}\nabla^{\alpha}_{x} f(x^k;\beta^k){^\top} d^k + \frac{1}{2} C_{\alpha,\beta} \varsigma_k^2 \eta_k^2 d^k{^\top} H(\xi;\beta^k) d^k\\
   &=  f(x^k) + c_1\varsigma_k \eta_k{}^{C}_{c}\nabla^{\alpha}_{x} f(x^k;\beta^k){^\top} d^k +\mathcal{R}(x^k;\xi;\beta^k),
\end{align*}
where
\[
\mathcal{R}(x^k;\beta^k;\xi)
=(1-c_1)\varsigma_k\eta_k\,{}^{C}_{c}\nabla^{\alpha}_{x} f(x^k;\beta^k)^{\top} d^k
+\frac{1}{2} C_{\alpha,\beta}\varsigma_k^2\eta_k^2\, d^{k\top} H(\xi;\beta^k) d^k .
\]
Then,
\[
\lim_{\varsigma_k\eta_k\rightarrow 0} \frac{\mathcal{R}(x^k;\beta^k;\xi)}{\varsigma_k\eta_k}<0
\]
for a sufficiently small value of $\varsigma_k\eta_k$ for any $\varsigma_k>0$. Hence,
\[
\mathcal{R}(x^k;\xi;\beta^k)<0
\]
for sufficiently small $\eta_k>0$. Therefore, for the upper bound of $\eta_k$, we have
\begin{align*}
\varsigma_k\eta_k\Big((1-c_1)\,{}^{C}_{c}\nabla^{\alpha}_{x} f(x^k;\beta^k)^{\top} d^k
+ \frac{1}{2} C_{\alpha,\beta}\varsigma_k\eta_k\, d^{k\top} H(\xi;\beta^k) d^k\Big)\leq 0.
\end{align*}
Since $\varsigma\eta_k>0$, it follows that
\begin{align*}
0<\varsigma_k\eta_k\leq
-\frac{2(1-c_1)\,{}^{C}_{c}\nabla^{\alpha}_{x} f(x^k;\beta^k)^{\top} d^k}
{C_{\alpha,\beta}\varsigma_k^2\eta_k^2\, d^{k\top} H(\xi;\beta^k) d^k}.
\end{align*}
Thus, inequality \eqref{eq6} holds for all $\eta_k>0$, is sufficiently small.

 {\begin{remark} Generally, it is worth noting that the use of the Armijo line search may increase the computational cost, as multiple objective function evaluations may be required at each iteration.
 In the present implementation, the fractional search direction is computed once at the current iterate, and the Armijo procedure is then used only to select a suitable step-size. Moreover, since we use a local finite-memory approximation of the Caputo derivative, the method avoids full history-dependent memory integrals.
Nevertheless, the successive objective function evaluations required by the Armijo line search may impose a significant computational burden, particularly for large-scale problems and complex neural-network models. However, fixed step-size methods may not provide reliable results for all classes of problems, because their performance is highly sensitive to the chosen step-size value.
 In future work, we plan to investigate more efficient step-size strategies, such as adaptive fixed step-size rules, approximate line-search techniques, or modified Armijo-type procedures, to further reduce the computational burden while preserving the stability of the proposed method.
 \end{remark}}
\section{Algorithm and Convergence Result}\label{sec3}

The section develops an explicit algorithm based on the theoretical results established so far. 
We also provide a convergence analysis of the proposed scheme and establish its convergence rate. 
The proposed algorithm is as follows.
\begin{algorithm}[H]
\caption{CFAGD Algorithm}
\label{algorithm1}
\begin{algorithmic}
    \State \textbf{Step 1.} \textit{Initialization:} Supply $f$, an initial point 
    $x^0 \in \mathbb{R}^n$, scalars $\sigma\in(0,1)$, $c_1$ with 
    $0<c_1<1$, and a tolerance $\epsilon>0$. Set $k=0$.
    
    \State \textbf{Step 2.} \textit{Optimality check:} Compute 
    ${}^{C}_{c}\nabla_{x}^{\alpha} f$. If 
    $\|{}^{C}_{c}\nabla_{x}^{\alpha} f(x^k;\beta^k)\|<\epsilon$, 
    then stop. Otherwise, proceed to the next step.
    
    \State \textbf{Step 3.} \textit{Descent direction:} Compute a descent 
    direction $d^k$.
    \label{ste2}
    
    \State \textbf{Step 4.} \textit{Acceleration:} Compute $a_k$ and $b_k$, 
    then $\varsigma_k$.

  \State \textbf{Step 5.} \textit{Line search:} Select $\eta_k$ satisfying 
    the inexact line search technique rule \eqref{eq6}.
    \label{line_search}
    
    \State \textbf{Step 6.} \textit{Update:} Compute 
    $x^{k+1}=x^k+\varsigma_k\eta_k d^k$. Update $k\gets k+1$ and return 
    to Step 2.
\end{algorithmic}
\end{algorithm}

 {Next, note that the term $b_k$ in \eqref{equation5} depends on the Hessian, whose direct evaluation can be computationally expensive. To avoid explicit Hessian computations, we express $b_k$ in terms of a fractional-gradient difference. Let $z^k$ be such that
\(
x^k=z^k+\eta_k d^k .
\)
Then, by the Taylor-type expansion, there exists a point $\tilde{x}^k$ on the line segment joining $z^k$ and $x^k$ such that
\begin{equation*}
\begin{aligned}
f(x^k)
&= f(z^k+\eta_k d^k) \\
&= f(z^k)
+\eta_k\left({}^{C}_{c}\nabla_{x}^{\alpha} f(z^k;\beta^k)\right)^{\top} d^k 
+\frac{1}{2}C_{\alpha,\beta}\eta_k^{2}
(d^k)^{\top}H(\tilde{x}^k;\beta^k)d^k .
\end{aligned}
\end{equation*}
Similarly, applying the corresponding gradient-difference relation along the same line segment gives
\[
{}^{C}_{c}\nabla_{x}^{\alpha} f(x^k;\beta^k)
=
{}^{C}_{c}\nabla_{x}^{\alpha} f(z^k;\beta^k)
+
C_{\alpha,\beta}\eta_k H(\tilde{x}^k;\beta^k)d^k .
\]
Therefore,
\[
{}^{C}_{c}\nabla_{x}^{\alpha} f(z^k;\beta^k)
-
{}^{C}_{c}\nabla_{x}^{\alpha} f(x^k;\beta^k)
=
-C_{\alpha,\beta}\eta_k H(\tilde{x}^k;\beta^k)d^k .
\]
Define
\[
y_k:=
{}^{C}_{c}\nabla_{x}^{\alpha} f(z^k;\beta^k)
-
{}^{C}_{c}\nabla_{x}^{\alpha} f(x^k;\beta^k).
\]
Then
\[
y_k
=
-C_{\alpha,\beta}\eta_k H(\tilde{x}^k;\beta^k)d^k .
\]
Taking the inner product with $d^k$, we obtain
\[
y_k^{\top}d^k
=
-C_{\alpha,\beta}\eta_k
(d^k)^{\top}H(\tilde{x}^k;\beta^k)d^k .
\]
Multiplying both sides by $\eta_k$, we get
\[
\eta_k y_k^{\top}d^k
=
-C_{\alpha,\beta}\eta_k^{2}
(d^k)^{\top}H(\tilde{x}^k;\beta^k)d^k .
\]
Hence,
\[
C_{\alpha,\beta}\eta_k^{2}
(d^k)^{\top}H(\tilde{x}^k;\beta^k)d^k
=
-\eta_k y_k^{\top}d^k .
\]
Since
\[
b_k=
C_{\alpha,\beta}\eta_k^{2}
(d^k)^{\top}H(\tilde{x}^k;\beta^k)d^k,
\]
we finally obtain
\[
b_k=-\eta_k y_k^{\top}d^k .
\]
Thus, $b_k$ can be computed through the fractional-gradient difference $y_k$ without explicitly evaluating the Hessian.}

Finally, the convergence and the order of convergence of the proposed  fractional gradient descent method are established in the following theorems.
\begin{theorem}\label{theorem1}
Suppose $\{x^k\}$ is a sequence generated by Algorithm \ref{algorithm1}. Further suppose $f$ is strongly convex on the level set $\mathcal{L}= \{x:f(x)\leq f(x_0)\}$ and there exists $M\geq m>0$ such that $mI\leq H(x^k;\beta^k)\leq MI$ it holds for all $x^k,\beta^k$. Then any accumulation point $x^*$ of $\{x^k\}$ satisfies ${}^{C}_{c}\nabla_{x}^{\alpha} f(x^*;\beta^*)(x^*)=0$.
\end{theorem}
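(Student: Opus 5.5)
The approach is the standard Armijo-type argument: show that the decrease in $f$ is summable, then show the step sizes cannot collapse to zero, so the fractional gradient must vanish in the limit.

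\emph{Step 1 (boundedness and telescoping).} By \eqref{equation6} and \eqref{eq6}, $\{f(x^k)\}$ is nonincreasing, so every iterate lies in the level set $\mathcal{L}$. Strong convexity on $\mathcal{L}$ makes $\mathcal{L}$ compact and $f$ bounded below on it. Summing the decrease inequality \eqref{eq6} from $k=0$ to $N$ and letting $N\to\infty$ gives
\[
\sum_{k=0}^{\infty} c_1\varsigma_k\eta_k\,\big\|{}^{C}_{c}\nabla_{x}^{\alpha} f(x^k;\beta^k)\big\|^2 \le f(x^0)-\inf_{\mathcal{L}} f<\infty ,
\]
using $d^k=-{}^{C}_{c}\nabla_{x}^{\alpha} f(x^k;\beta^k)$. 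Hence $\varsigma_k\eta_k\|{}^{C}_{c}\nabla_{x}^{\alpha} f(x^k;\beta^k)\|^2\to 0$.

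\emph{Step 2 (uniform lower bound on the effective step).} Next I bound $\varsigma_k\eta_k$ away from zero, and this is the main obstacle.

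First I control $\varsigma_k$. We have $\varsigma_k=a_k/b_k$ with $a_k=\eta_k\|d^k\|^2$; the sign in the paper's definition of $a_k$ is to be read so that $a_k\ge 0$. We also have $b_k=C_{\alpha,\beta}\eta_k^2 d^{k\top}H d^k$. The bounds $mI\le H\le MI$ then give
\[
\frac{1}{C_{\alpha,\beta}M\eta_k}\le\varsigma_k\le\frac{1}{C_{\alpha,\beta}m\eta_k},
\]
so that $\varsigma_k\eta_k\in[1/(C_{\alpha,\beta}M),\,1/(C_{\alpha,\beta}m)]$. This needs $C_{\alpha,\beta}$ to stay in a compact subset of $(0,\infty)$. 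Since $\beta^k$ changes by $\pm 0.1$ per step, I would add, or argue from the setting, that $\{\beta^k\}$ remains bounded and $C_{\alpha,\beta^k}\ge C_0>0$. This point needs care.

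Independently, along the lines of Lemma~\ref{lem:stepsize_lb}, the quadratic expansion \eqref{Taylor2} with $H\le MI$ shows that \eqref{eq6} holds whenever
\[
\varsigma_k\eta_k\le \frac{2(1-c_1)}{C_{\alpha,\beta}M}.
\]
So backtracking with factor $\sigma$ ends with an effective step at least $\min\{s_0,\,2\sigma(1-c_1)/(C_{\alpha,\beta}M)\}$.

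Either route gives $\varsigma_k\eta_k\ge \tau>0$ uniformly in $k$. Combined with Step 1, this yields ${}^{C}_{c}\nabla_{x}^{\alpha} f(x^k;\beta^k)\to 0$.

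\emph{Step 3 (passing to the limit).} Let $x^{k_j}\to x^*$. By boundedness of $\{\beta^k\}$ (in fact $\beta^k$ takes values on a lattice), I pass to a further subsequence with $\beta^{k_j}\to\beta^*$. Continuity of $(x,\beta)\mapsto{}^{C}_{c}\nabla_{x}^{\alpha} f(x;\beta)$ then gives ${}^{C}_{c}\nabla_{x}^{\alpha} f(x^*;\beta^*)=0$. This continuity follows from Lemma~\ref{thm1}, which identifies the modified Caputo gradient with the classical gradient at the expansion point, and from the continuity of $f_{\alpha,\beta}$ in its arguments.

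The delicate points are the uniform control of $\beta^k$ and $C_{\alpha,\beta^k}$, and the sign convention for $a_k$. I would make both explicit as standing assumptions if the text does not provide them.
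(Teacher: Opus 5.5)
Your proof is correct under the same tacit hypotheses the paper relies on: Hessian bounds at the intermediate points, success of the line search in Step~5, and continuity of the fractional gradient. Your Step~2, however, takes a different route from the paper.

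The paper does not work with \eqref{eq6}. It writes $f(x^{k+1})=f(x^k+\eta_kd^k)-\frac{(a_k-b_k)^2}{2b_k}$ via \eqref{equation3}. It then applies an Armijo bound to the \emph{unscaled} trial point, with the backtracking bound $\eta_k\ge\sigma/M$ (or $\eta_k=1$), and adds the acceleration bonus to get $f(x^k)-f(x^{k+1})\ge\mathcal{M}\|d^k\|^2$ in \eqref{equation11}. You instead telescope the test \eqref{eq6} that Step~5 of Algorithm~\ref{algorithm1} actually imposes. You bound the effective step from below via $\varsigma_k\eta_k=\|d^k\|^2/(C_{\alpha,\beta}\,d^{k\top}Hd^k)$, a Rayleigh-quotient step that does not depend on $\eta_k$.

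The paper's route displays the acceleration gain inside $\mathcal{M}$, which it reuses for the rate $e=1-2m\mathcal{M}$ in Theorem~\ref{theorem2}. But it leans on three weak points:
\begin{itemize}
\item \eqref{equation3}, which is exact only if both Taylor expansions share the same intermediate point;
\item an Armijo test for the unscaled step rather than \eqref{eq6};
\item a claimed positive lower bound for $\frac{(a_k-b_k)^2}{2b_k}$, a quantity that vanishes whenever $a_k=b_k$. This is harmless for Theorem~\ref{theorem1}, since that term can be dropped.
\end{itemize}

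Your route avoids all three, and it gives more. The upper bound $\varsigma_k\eta_k\le 1/(C_0m)$ yields $\|x^{k+1}-x^k\|\le\|d^k\|/(C_0m)\to0$. So eventually no coordinate moves by $0.5$, and $\beta^k$ is eventually constant. This produces $\beta^*$, which the paper merely posits, without any separate boundedness assumption on $\{\beta^k\}$. It also means continuity is needed only in $x$ at the fixed $\beta^*$.

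Your worry about $C_{\alpha,\beta^k}$ is legitimate. The paper sidesteps it by silently dropping $C_{\alpha,\beta}$ from its estimates (compare \eqref{eq:taylor_step} with \eqref{Taylor2}). This effectively assumes $mI\le C_{\alpha,\beta}H\le MI$, under which $\varsigma_k\eta_k\in[1/M,1/m]$ outright.

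Two small fixes. First, drop the backtracking alternative in Step~2. Because $\varsigma_k\eta_k$ is independent of $\eta_k$, shrinking $\eta_k$ by $\sigma$ does not shrink the effective step, so that argument does not apply as written; the closed-form bound alone suffices. Second, continuity of $x\mapsto{}^{C}_{c}\nabla^{\alpha}_{x}f(x;\beta^*)$ does not follow from Lemma~\ref{thm1} and continuity of $f_{\alpha,\beta}$. State it as an assumption, as the paper implicitly does with its ``clearly''.
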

\textit{Proof:~}
From \eqref{equation6} we have $f(x^{k+1})\leq f(x^k)$ for all $k$. This implies $\{x^k\}\subset \mathcal{L}$. So $\{x^k\}$ is a bounded sequence. Further, $\{f(x^k)\}$ is a monotone decreasing and bounded sequence. Then it follows that $$\underset{k\rightarrow \infty}{\lim}~~ f(x^{k+1})-f(x^k)=0.$$ 
From \eqref{equation3}, for $a_k<b_k$ we obtain
\begin{equation}\label{eq10}
f(x^k+\varsigma_m \eta_k d^k)\leq f(x^k+\eta_k d^k)-\frac{(a_k-b_k)^2}{2b_k}.
\end{equation}
Moreover, by \eqref{Taylor1} and the assumption $H(x^k;\beta^k)\leq MI$, it follows that
\begin{equation}\label{eq:taylor_step}
f(x^k+\eta_k d^k)\leq f(x^k)-\eta_k\|d^k\|^2+\frac{M\eta_k^2}{2}\|d^k\|^2.
\end{equation}
Observe that $-\eta_k+\frac{M\eta_k^2}{2}\leq -\frac{\eta_k}{2}$ holds for $0\leq \eta_k\leq \frac{1}{M}$. Using this estimate in \eqref{eq:taylor_step} yields
\begin{align*}
f(x^k+\eta_k d^k)
&\leq f(x^k)-\eta_k\|d^k\|^2+\frac{M\eta_k^2}{2}\|d^k\|^2\\
&\leq f(x^k)-\frac{\eta_k}{2}\|d^k\|^2\\
&\leq f(x^k)-c_1\eta_k\|d^k\|^2,
\end{align*}
where the last inequality holds since $c_1\leq \frac{1}{2}$. Substituting the above bound into \eqref{eq10}, we obtain
\begin{equation}\label{eq9}
f(x^k+\varsigma_m \eta_k d^k)
\leq f(x^k)-c_1\eta_k\|d^k\|^2-\frac{(a_k-b_k)^2}{2b_k}.
\end{equation}

   The backtracking procedure terminates with either $\eta_k=1$ or $\eta_k\geq \frac{\sigma}{M}$, where $\sigma\in(0,1)$. This provides a lower bound on the decrease in the function $f$.  
For $\eta_k=1$, we have
\begin{equation*}
f(x^k+\eta_k d^k)\leq f(x^k)-c_1\|d^k\|^2.
\end{equation*}
Similarly for $\eta_k\geq \frac{\sigma}{M}$,
\begin{equation*}
f(x^k+\eta_k d^k)\leq f(x^k)-\frac{c_1\sigma}{M}\|d^k\|^2.
\end{equation*}
Consequently, for $0\leq \eta_k\leq \frac{1}{M}$, 
\begin{equation}\label{equation10}
f(x^k+\eta_k d^k)\leq f(x^k)-\min\Big\{c_1,\frac{c_1\sigma}{M}\Big\}\|d^k\|^2.
\end{equation}
On the other hand,
\begin{align*}
\frac{(a_k-b_k)^2}{2b_k}
&\geq \frac{\big(\eta_k\|d^k\|^2-\eta_k^2 M\|d^k\|^2\big)^2}{2\eta_k^2 M\|d^k\|^2}
= \frac{(1-\eta_k M)^2}{2M}\|d^k\|^2.
\end{align*}
Similarly, for $0\leq \eta_k\leq \frac{1}{M}$, we have
\begin{equation}\label{equation9}
\frac{(a_k-b_k)^2}{2b_k}\geq
\min\Big\{\frac{(1-M)^2}{2M},\,\frac{(1-\sigma)^2}{2M}\Big\}\|d^k\|^2.
\end{equation}
In view of \eqref{equation10} and \eqref{equation9}, define
\[
\mathcal{M}
:= \min\left\{c_1,\frac{c_1\sigma}{M}\right\}
 + \min\left\{\frac{(1-M)^2}{2M},\,\frac{(1-\sigma)^2}{2M}\right\}.
\]
 Then, from \eqref{eq9}
        \begin{equation*}
             f(x^k + \varsigma_m \eta_k d^k) \leq f(x^k)  - \mathcal{M}\|d^k\|^2.
        \end{equation*}
This implies
        \begin{equation}
            f(x^k)- f(x^{k+1}) \geq  \mathcal{M} \|d^k\|^2.\label{equation11}
        \end{equation}
This along with $\lim_{k\rightarrow \infty} f(x^{k+1})-f(x^k)=0$ implies ${}^{C}_{c}\nabla_{x}^{\alpha} f(x^k;\beta^k) \rightarrow0$ as $k\rightarrow \infty$. Since $\{x^k\}$ is a bounded sequence, it has convergent subsequences. Suppose $\{x^k\}_{k\in K}$ be a convergent subsequence converging to $x^*$. Then clearly $\{{}^{C}_{c}\nabla_{x}^{\alpha} f(x^k;\beta^k)\}_{k\in K}$ converges to ${}^{C}_{c}\nabla_{x}^{\alpha} f(x^*;\beta^*)$. This along with ${}^{C}_{c}\nabla_{x}^{\alpha} f(x^k;\beta^k) \rightarrow0$ implies ${}^{C}_{c}\nabla_{x}^{\alpha} f(x^*;\beta^*)=0$.

\begin{theorem}\label{theorem2}
Under the assumptions of Theorem \ref{theorem1} sequence $\{f(x^k)\}$ generated by the CFAGD algorithm converges to $f^*:=f(x^*)$ at least linearly.
\end{theorem}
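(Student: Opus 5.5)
The plan is the standard linear-rate argument for gradient methods on strongly convex functions: combine the sufficient-decrease estimate from the proof of Theorem~\ref{theorem1} with a Polyak--{\L}ojasiewicz-type inequality derived from the lower Hessian bound $mI\leq H$.

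\textbf{Step 1 (sufficient decrease).} The proof of Theorem~\ref{theorem1} gives inequality \eqref{equation11}:
\[
f(x^k)-f(x^{k+1})\geq \mathcal{M}\|d^k\|^2, \qquad d^k=-{}^{C}_{c}\nabla_{x}^{\alpha} f(x^k;\beta^k),
\]
with $\mathcal{M}>0$ independent of $k$. I will take this inequality as given.

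\textbf{Step 2 (gradient-dominance inequality).} I will prove that for every iterate
\[
f(x^k)-f^*\leq \frac{1}{2m}\,\|{}^{C}_{c}\nabla_{x}^{\alpha} f(x^k;\beta^k)\|^2 .
\]
Apply the modified Taylor expansion \eqref{Taylor1}, centred at $x^k$, with the Caputo gradient (which coincides with the classical gradient at the centre) and $H(\xi)\geq mI$. This gives, for every $y$,
\[
f(y)\geq f(x^k)+g_k^\top(y-x^k)+\frac{m C_{\alpha,\beta}}{2}\|y-x^k\|^2, \qquad g_k:={}^{C}_{c}\nabla_{x}^{\alpha} f(x^k;\beta^k).
\]
Minimising the right-hand side over $y$ yields $f^*\geq f(x^k)-\|g_k\|^2/(2mC_{\alpha,\beta})$. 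The constant $C_{\alpha,\beta}$ is positive for the admissible $\beta$; I will either absorb it into $m$ or assume $C_{\alpha,\beta}\geq 1$, and state explicitly that it is bounded below uniformly along the iteration.

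Here $f^*=f(x^*)$, and I must check that it is the minimum value on $\mathcal{L}$. By Theorem~\ref{theorem1}, $x^*$ is a stationary point. By strong convexity the stationary point is unique and is the global minimiser. Since $\{f(x^k)\}$ is monotone, the whole sequence of values $f(x^k)$ converges to $f^*$.

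\textbf{Step 3 (contraction).} Combining the two steps,
\[
f(x^{k+1})-f^*\leq f(x^k)-f^*-\mathcal{M}\|g_k\|^2\leq \bigl(1-2m\mathcal{M}\bigr)\bigl(f(x^k)-f^*\bigr).
\]
Set $q:=1-2m\mathcal{M}$. Because $f(x^{k+1})-f^*\geq 0$, the inequality itself forces $q\geq 0$, and $q<1$ since $m,\mathcal{M}>0$. Iterating gives $f(x^k)-f^*\leq q^k\bigl(f(x^0)-f^*\bigr)$, which is at-least-linear convergence.

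\textbf{Main obstacle.} The difficulty lies in Step 2 and in the uniformity of the constants. The Hessian bound is only assumed along the iterates $(x^k,\beta^k)$, whereas the Taylor point $\xi$ lies on a segment. I will read the hypothesis of Theorem~\ref{theorem1} as holding on the whole level set $\mathcal{L}$. Since $\mathcal{L}$ is convex under strong convexity, the segment stays in $\mathcal{L}$.

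The factor $C_{\alpha,\beta^k}$ varies with $\beta^k$, but the adaptive rule changes $\beta^k$ by at most $0.1$ per step. I will argue that $\beta^k$ stays bounded, because each change requires a coordinate displacement of at least $0.5$ and $\sum\|x^{k+1}-x^k\|^2<\infty$ follows from \eqref{equation11}. Hence only finitely many changes occur, and $C_{\alpha,\beta^k}$ lies in a compact positive range, so $m$ can be replaced by $m\min_k C_{\alpha,\beta^k}>0$.
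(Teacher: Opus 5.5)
Your main line is the paper's proof. Both arguments combine the sufficient decrease \eqref{equation11} with the gradient-dominance bound \eqref{equation12} to get $f(x^{k+1})-f^*\leq(1-2m\mathcal{M})(f(x^k)-f^*)$. Minimising your quadratic lower bound is the same computation as the paper's strong-convexity inequality at $x^*$ followed by Cauchy--Schwarz and Young. Note that you need that lower bound only at $y=x^*$, whose segment to $x^k$ lies in $\mathcal{L}$, not for every $y$. Your remark that $f(x^{k+1})-f^*\geq 0$ forces $1-2m\mathcal{M}\geq 0$ (unless $f(x^0)=f^*$) puts the contraction factor in $[0,1)$ directly. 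This avoids the paper's explicit estimate of $2m\mathcal{M}$ via $\mathcal{M}'$ and $\mathcal{M}''$.

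The step that does not work as written is your argument for a uniform positive lower bound on $C_{\alpha,\beta^k}$. Inequality \eqref{equation11} only yields $\sum_k\|d^k\|^2<\infty$. Since $x^{k+1}-x^k=\varsigma_k\eta_k d^k$ with $\varsigma_k\eta_k=\eta_k|a_k|/b_k=\|d^k\|^2/\bigl(C_{\alpha,\beta^k}(d^k)^{\top}H d^k\bigr)\leq 1/(mC_{\alpha,\beta^k})$, passing to $\sum_k\|x^{k+1}-x^k\|^2<\infty$ already requires $\inf_k C_{\alpha,\beta^k}>0$. That is exactly what you are trying to conclude, so the argument is circular.

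The paper never meets this issue. It obtains \eqref{equation12} from the strong-convexity hypothesis itself (modulus $m$, stated for the Caputo gradient) rather than from \eqref{Taylor1}, so no $C_{\alpha,\beta}$ enters. Elsewhere it silently absorbs $C_{\alpha,\beta}$ into the Hessian bounds (compare \eqref{eq:taylor_step}). Doing the same is the simplest repair.

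If you want to keep your route, break the circle with Theorem~\ref{theorem1} instead. Every accumulation point is stationary, hence by strong convexity the unique minimiser $x^*$, so the bounded sequence $\{x^k\}$ converges to $x^*$ and $\|x^{k+1}-x^k\|\to 0$. Then only finitely many $\beta$-updates occur, and $C_{\alpha,\beta^k}$ takes finitely many values. Each value is positive because $b_k>0$ is needed for $\varsigma_k$ to be defined.
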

\textit{Proof.~} From strong convexity of $f$,
\begin{align*}
    f(x^k)- f^* &\leq {}^{C}_{c}\nabla_{x}^{\alpha} f(x^k;\beta^k)^\top (x^k-x^*) -\frac{m}{2}\|x^k-x^*\|^2\notag\\
    &\leq \|{}^{C}_{c}\nabla_{x}^{\alpha} f(x^k;\beta^k)\|\, \|x^k-x^*\| -\frac{m}{2}\|x^k-x^*\|^2.
\end{align*}
By applying Young's inequality \cite{Young}, we obtain
\begin{align}
    & \|{}^{C}_{c}\nabla_{x}^{\alpha} f(x^k;\beta^k)\|^2 \geq 2m \Big(f(x^k)- f^*\Big).\label{equation12}
\end{align}
Denote $e:= 1-2m \mathcal{M}$. Clearly $e\in(0,1)$. Now, from \eqref{equation11} and \eqref{equation12}, it follows that
\begin{align*}
    f(x^{k+1})&\leq f(x^k) - \frac{1-e}{2m}\| {}^{C}_{c}\nabla_{x}^{\alpha} f(x^k;\beta^k)\|^2\\
   &\leq f(x^k) -\Big((1-e) (f(x^k)- f^*\Big),
\end{align*}
this implies $$f(x^{k+1})- f^*\leq e\Big(f(x^k)-f^*\Big).$$ Since \(c_1,\sigma,m,M \ge 0\), it follows that \(\mathcal{M}\ge 0\). To bound each possible minimum term in \(2m\mathcal{M}\), let
\begin{align*}
\mathcal{M}' := \frac{2mc_1\sigma}{M}+\frac{(1-M)^2m}{M}~~~\text{and}~~~\mathcal{M}'' := \frac{2mc_1\sigma}{M}+\frac{(1-\sigma)^2m}{M}.
\end{align*}
Since \(0\leq m\leq M\), \(0\le c_1\le \frac{1}{2}\), and termination conditions of the backtracking procedure, it follows that \(\mathcal{M}',\mathcal{M}''<1\). Therefore, we conclude that \(e<1\). 
Therefore, the convergence of $f(x^k)$ to $f^*$ depends on the inexact line search parameters and the condition number bound $\frac{M}{m}$; hence, $\{f(x^k)\}$ converges to $f^*$ at least linearly.
 {\begin{remark}
The convergence results established in Theorems \ref{theorem1} and \ref{theorem2} rely on strong convexity and bounded Hessian assumptions.
The Hessian considered in the convergence analysis is associated with the Caputo fractional-gradient mapping, not necessarily with the classical gradient. Therefore, the results may also apply to functions that are non-smooth in the classical sense, provided the Caputo fractional gradient and the corresponding fractional Hessian exist and satisfy the stated assumptions.
\end{remark}
}

\section{Numerical Examples}\label{sec4}
\subsection{Application to Composite Optimization Problems}
The proposed algorithm is applicable to a broad class of composite optimization problems arising in many fields, including machine learning, signal processing, statistics, image processing, computer vision, control, and finance. In this section, we have compared the CFAGD method with the classical sub-gradient method and the Caputo fractional gradient descent (CFGD) method developed in \cite{27}. Python-based codes are developed for each method.  Each method is executed for a set of composite optimization problems.  {ProxGD is used as the baseline because the considered problems are of composite form \(f=g+h\), where \(g\) is smooth and \(h\) is convex but non-smooth. The proximal gradient method is specifically designed for this structure: it exploits the smooth part through a gradient step and handles the non-smooth part through the proximal operator. Therefore, it provides a more theoretically appropriate and numerically reliable reference than a general-purpose sub-gradient scheme. Since the problems considered here are strongly convex, ProxGD also yields a stable reference solution \(x^\star\) for measuring the \(\ell_2\)-distance of the competing methods. In this study, the objective contains both \(\ell_{1}\)- and \(\ell_{2}\)-norm regularization terms.} Therefore, the problem is non-smooth because of the \(\ell_{1}\)-term, while it remains strongly convex due to the quadratic loss together with the \(\ell_{2}\)-term. Strong convexity guarantees the existence of a unique minimizer and allows for a stable comparison among different algorithms. Two evaluation metrics are used: (i)~the number of outer \emph{iterations}, and (ii)~the \(\ell_{2}\)-distance from a reference stationary point computed by ProxGD. Thus, we record results for each competing method; sub-gradient updates may become numerically unstable in high-dimensional or ill-conditioned settings. In particular, as the effective rank increases, the sub-gradient method can exhibit degraded numerical behavior. Hence, one objective of our experiments is to determine the rank range over which the sub-gradient method remains reliable.  {By contrast, ProxGD is well-suited for composite models: it exploits the smooth structure of the loss while treating the non-smooth term through the proximal operator. It is also widely used in the literature as a reliable reference solver. For these reasons, we use ProxGD both as a strong baseline and as the method for generating the reference point \(x^{\star}\).}

For each run, we generate a matrix \(A\in\mathbb{R}^{m\times n}\) with independent and identically distributed standard normal entries, where \(m=100\) and \(n=50\). We choose a structured sparse vector \(x_{\text{true}}\) for preventing noise and set $
b = Ax_{\text{true}} + \sigma\varepsilon$ such that $\varepsilon\sim\mathcal{N}(0,I),$
where the noise level is \(\sigma=0.05\). All methods start from the same initialization \(x^{0}\), with each entry sampled uniformly from \((-1,1)\). We repeat this setup for \(10\) problem types with case-specific choices of \(x_{\text{true}}\), and each case is run five times. The reference point \(x^{\star}\) is obtained using ProxGD, whose step-size is determined by the Armijo backtracking process. The sub-gradient method uses the full objective \(f\) together with an inexact line-search technique.

 {CFGD computes a Caputo-type fractional gradient of \(g\) using numerical quadrature and then applies an inexact line search to the full objective \(f\). Different choices of $\alpha$ were examined in the experiments, and $\alpha=0.9$ was selected as the most effective value. Since the matrices are randomly generated, we repeat the same code cell in order to avoid biased conclusions. All experiments are conducted in Google Colab using standard scientific libraries such as NumPy and SciPy. The stopping tolerance is \(10^{-4}\). The maximum number of iterations is \(1000\) for the sub-gradient method and \(200\) for CFGD and CFAGD, because every experiment converges and does not exceed \(200 \). Since the proposed CFAGD framework is designed to handle both smooth and non-smooth optimization problems, the numerical experiments are conducted on a collection of test problems containing both classes of objective functions.} 
Table~\ref{table_itr} reports the iteration counts and the $\ell_{2}$-distance to the reference stationary point for $10$ test cases, with $5$ independent runs per case. Since the Caputo fractional derivative is not well suited to the \(\ell_{\infty}\)-term, ProxGD is used instead to handle that part of the objective.
Overall, the sub-gradient method requires the most iterations and frequently hits the maximum iteration, $1000$ which we fixed here. Across all $50$ runs, its iteration counts range from $98$ (Case~6, Run~2) to $1000$, while its $\ell_{2}$-distances range from $0.1443$ (Case~6, Run~3) to $1.7629$ (Case~9, Run~3).
CFGD substantially reduces the iteration counts, ranging from $33$ (Case~2, Run~4 and Case~7, Run~4) to $200$ (e.g., Case~2, Runs~1 and~3; Case~3, Run~3). For CFGD, we fixed maximum iteration $200$.
Its $\ell_{2}$-distances range from $0.0172$ (Case~5, Run~3) to $2.2453$ (Case~4, Run~1).
CFAGD achieves the smallest iteration counts, ranging from $17$ (Case~1, Runs~4--5; Case~2, Run~4) to $61$ (Case~4, Run~2).
It also attains the smallest $\ell_{2}$distances overall, ranging from $6.512\times 10^{-4}$ (Case~1, Run~4) to $9.549\times 10^{-2}$ (Case~6, Run~4).
For instance, in Case~2, Run~1, the sub-gradient method uses $145$ iterations with distance $1.2082$, CFGD uses $200$ iterations with distance $0.04847$, and CFAGD uses $19$ iterations with distance $0.004590$.
Moreover, in Case~5 (hinge $+$ elastic-net), CFAGD yields consistently small distances
$\{0.001801,\,0.001870,\,0.001530,\,0.001150,\,0.002800\}$ across the five runs.
Figure \ref{figure1} is describe a clear picture for the first two problem results of these three methods. For the remaining problems, graphs can be plot but here we attached for the first two problems.

This study is motivated by the practical difficulty of solving strongly convex but non-smooth composite optimization problems using classical sub-gradient schemes. Such methods may be sensitive to step-size selection and may show slow progress, especially in high-dimensional settings. To address this issue, we investigate Caputo fractional methods as first-order-type approaches that may improve the descent behaviour while keeping the implementation relatively simple.

All methods are implemented under the same experimental protocol. The numerical results show that the proposed CFAGD method gives better practical performance than CFGD and the sub-gradient method on the tested problems. These results suggest that combining fractional-order information with the proposed acceleration mechanism can improve the behaviour of gradient-type methods for the considered non-smooth objectives. However, the comparison is problem-dependent, and further theoretical analysis and broader numerical studies are required.

\begin{figure}[ht]
    \centering
    \begin{subfigure}[b]{0.30\linewidth}
        \centering
        \includegraphics[width=\linewidth]{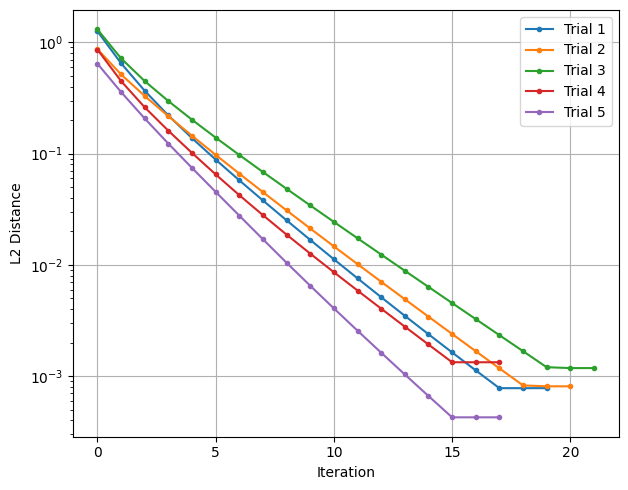}    
    \end{subfigure}
    \hfill
    \begin{subfigure}[b]{0.30\linewidth}
        \centering
        \includegraphics[width=\linewidth]{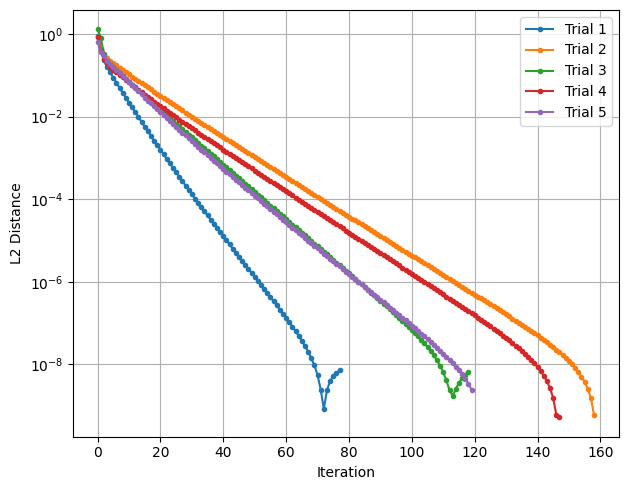} 
    \end{subfigure}
    \hfill
    \begin{subfigure}[b]{0.30\linewidth}
        \centering
        \includegraphics[width=\linewidth]{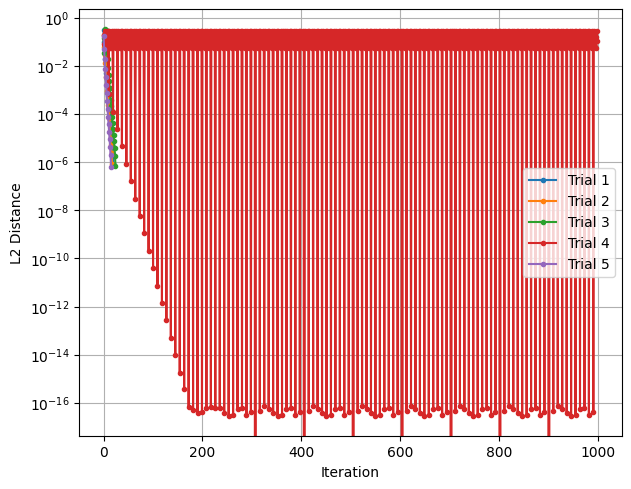}     
    \end{subfigure}

    \vspace{0.5cm}  

    \begin{subfigure}[b]{0.30\linewidth}
        \centering
        \includegraphics[width=\linewidth]{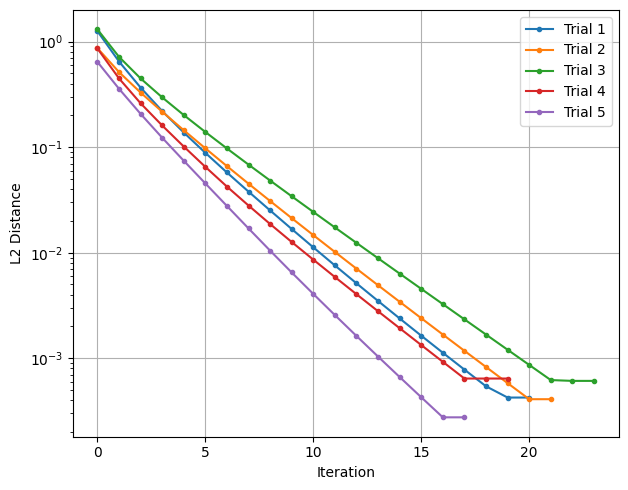}        
    \end{subfigure}
    \hfill
    \begin{subfigure}[b]{0.30\linewidth}
        \centering
        \includegraphics[width=\linewidth]{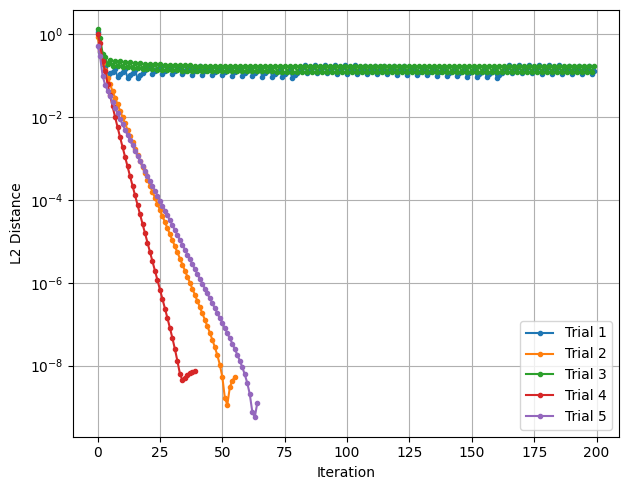}  
    \end{subfigure}
    \hfill
    \begin{subfigure}[b]{0.30\linewidth}
        \centering
        \includegraphics[width=\linewidth]{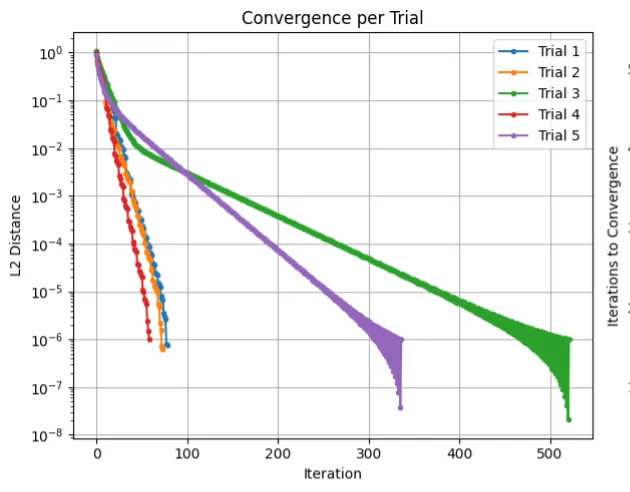}  
    \end{subfigure}

   \caption{Iterations with $\ell_{2}$ distance of CFAGD, CFGD, and sub-gradient method for elastic-net least squares and Ridge-LAD (robust $\ell_{1}$ loss) problem. }\label{figure1}
\end{figure}
\begingroup
\scriptsize
\setlength{\tabcolsep}{2pt}
\renewcommand{\arraystretch}{0.85}

\begin{longtable}{p{4.2cm}c|cc|cc|cc}
\\
\toprule
\multicolumn{2}{c|}{}
& \multicolumn{2}{c|}{Sub-gradient}
& \multicolumn{2}{c|}{CFGD}
& \multicolumn{2}{c}{CFAGD} \\
\cmidrule(lr){3-4}\cmidrule(lr){5-6}\cmidrule(lr){7-8}
Case & Run
& Iter. & $\ell_{2}$-dist.
& Iter. & $\ell_{2}$-dist.
& Iter. & $\ell_{2}$-dist. \\
\midrule
\endfirsthead

\toprule
\multicolumn{2}{c|}{}
& \multicolumn{2}{c|}{Sub-gradient}
& \multicolumn{2}{c|}{CFGD}
& \multicolumn{2}{c}{CFAGD} \\
\cmidrule(lr){3-4}\cmidrule(lr){5-6}\cmidrule(lr){7-8}
Case & Run
& Iter. & $\ell_{2}$-dist.
& Iter. & $\ell_{2}$-dist.
& Iter. & $\ell_{2}$-dist. \\
\midrule
\endhead

\midrule
\multicolumn{8}{r}{Continued on next page}
\endfoot

\endlastfoot
\multirow{5}{*}{1. Elastic-net least squares \cite{ref1}}
& 1 & 148 & 1.2076 & 70 & 0.04781 & 19 & 0.0073 \\
& 2 & 1000 & 0.9127 & 155 & 0.08372 & 19 & 0.00635 \\
& 3 & 145 & 1.1013 & 115 & 0.07545 & 21 & 0.0007232 \\
& 4 & 155 & 1.0231 & 145 & 0.08520 & 17 & 0.0006512 \\
& 5 & 1000 & 0.9985 & 120 & 0.08551 & 17 & 0.000695 \\
\midrule
\multirow{5}{*}{2. Ridge-LAD (robust $\ell_{1}$ loss)}
& 1 & 111 & 0.0070 & 200 & 0.04847 & 19 & 0.004590 \\
& 2 & 101 & 0.0071 & 51 & 0.4506 & 20 & 0.003890 \\
& 3 & 647 & 0.0070 & 200 & 00.4936 & 21 & 0.004356 \\
& 4 & 82 & 0.0070& 33 & 0.05004 & 17 & 0.004538 \\
& 5 & 415 & 0.0072 & 67 & 0.4654 & 25 & 0.004562 \\
\midrule
\multirow{5}{*}{3. Elastic-net logistic regression \cite{ref3}}
& 1 & 117 & 0.9283 & 57 & 0.04010 & 19 & 0.0066 \\
& 2 & 111 & 0.9297 & 52 & 0.04090 & 48 & 0.0031 \\
& 3 & 1000 & 1.2745 & 200 & 0.03920 & 21 & 0.0035 \\
& 4 & 105 & 0.9289 & 75 & 0.04065 & 54 & 0.0035 \\
& 5 & 1000 & 1.2801 & 62 & 0.03982 & 24 & 0.0055 \\
\midrule
\multirow{5}{*}{4. SVM (hinge) $+ \ell_{2}$ regulararization \cite{ref4}}
& 1 & 115 & 0.73212 & 79 & 2.24525 & 37 & 0.03274 \\
& 2 & 1000 & 1.7486  & 83 & 1.7938  & 61 & 0.03450 \\
& 3 & 121 & 0.7194  & 76 & 1.8876  & 34 & 0.03416 \\
& 4 & 1000 & 0.7369  & 81 & 1.9310  & 42 & 0.03278 \\
& 5 & 126 & 0.7257  & 74 & 1.9649  & 36 & 0.03387 \\
\midrule
\multirow{5}{*}{5. Hinge + elastic-net \cite{ref5}}
& 1 & 1000 & 0.9480 & 73 & 0.0184 & 27 & 0.001801 \\
& 2 & 141 & 0.94623 & 76 & 0.0201 & 22 & 0.00187 \\
& 3 & 156 & 0.94398 & 70 & 0.0172 & 28 & 0.00153 \\
& 4 & 162 & 0.94516 & 74 & 0.0189 & 22 & 0.00115 \\
& 5 & 147 & 0.94445 & 71 & 0.0196 & 23 & 0.0028 \\
\midrule
\multirow{5}{*}{6. Squared loss+ $\ell_{\infty}$ +ridge \cite{ref6}}
& 1 & 104 & 0.1465 & 46 & 0.4693 & 37 & 0.0466 \\
& 2 & 98  & 0.1521 & 44 & 0.4587 & 40 & 0.09092 \\
& 3 & 110 & 0.1443 & 48 & 0.4718 & 33 & 0.09509 \\
& 4 & 116 & 0.1497 & 45 & 0.4652 & 35 & 0.09549 \\
& 5 & 102 & 0.1460 & 47 & 0.4689 & 32 & 0.09537 \\
\midrule

\multirow{5}{*}{7. Group Lasso + ridge \cite{ref7}}
& 1 & 1000 & 0.7323 & 44 & 0.4431 & 36 & 0.001476 \\
& 2 & 1000 & 0.7248 & 62 & 0.4386 & 33 & 0.001874 \\
& 3 & 128 & 0.17371 & 45 & 0.4449 & 34 & 0.001924 \\
& 4 & 146 & 0.9294 & 33 & 0.4418 & 32 & 0.001937 \\
& 5 & 139 & 0.97339 & 44 & 0.4435 & 38 & 0.001924 \\
\midrule
\multirow{5}{*}{8. Sparse group Lasso + ridge \cite{ref8}}
& 1 & 105 & 0.81467 & 45 & 0.3723 & 37 & 0.00466 \\
& 2 & 101 & 0.91442 & 43 & 0.3667 & 43 & 0.008749 \\
& 3 & 1000  & 0.81479 & 56 & 0.3736 & 44 & 0.009245 \\
& 4 & 110 & 1.1456 & 44 & 0.3709 & 32 & 0.009372 \\
& 5 & 103 & 0.91469 & 75 & 0.3728 & 38 & 0.009241 \\
\midrule
\multirow{5}{*}{9. Fused Lasso (1D) + ridge \cite{ref9}}
& 1 & 550 & 0.7558 & 99 & 0.3545 & 26 & 0.003997 \\
& 2 & 243 & 0.97486 & 76 & 0.512 & 28 & 0.002989 \\
& 3 & 1000 & 1.7629 & 81 & 0.2368 & 29 & 0.00294 \\
& 4 & 348 & 0.7541 & 78 & 0.9837 & 29 & 0.003194 \\
& 5 & 252 & 0.7573 & 80 & 0.4651 & 27 & 0.003369 \\
\midrule
\multirow{5}{*}{10. SVM(hinge) + elastic-net}
& 1 & 190 & 0.9527 & 65 & 0.8540 & 54 & 0.002926 \\
& 2 & 186 & 0.9489 & 62 & 0.8497 & 53 & 0.002874 \\
& 3 & 194 & 0.9568 & 68 & 0.8579 & 44 & 0.002924 \\
& 4 & 189 & 0.9516 & 64 & 0.8528 & 52 & 0.002937 \\
& 5 & 192 & 0.9539 & 66 & 0.8551 & 58 & 0.003924 \\
\bottomrule
\caption{Iteration counts and $\ell_{2}$-distance from the reference stationary point by $ProxGD$ for the Sub-gradient, CFGD, and CFAGD methods.}
\label{table_itr}
\end{longtable}
\endgroup
 {\begin{remark}The present numerical comparison aims to demonstrate the effectiveness of the proposed CFAGD approach, particularly the impact of the adaptive (\(\beta^k)\)-parameter. Therefore, CFAGD is compared mainly with the existing CFGD method and the classical subgradient method under the same experimental setting. Although CFAGD involves additional computational cost per iteration due to the fractional gradient approximation and Armijo line search, it is designed to reduce the number of iterations required to reach the stopping criterion. That is, each iteration is generally more expensive than the classical gradient descent method because the fractional gradient approximation and the Armijo line search require additional computations and function evaluations. Here, the numerical results indicate that, although the proposed method has a higher cost per
iteration, it can reduce the number of iterations and improve the convergence behavior. On the other hand, ProxGD is used only as a reliable reference solver for computing the reference stationary point for the composite non-smooth problems. Hence, the results provide a focused comparison within the fractional-gradient framework, rather than a direct comparison with the proximal gradient method. Reducing the CPU time and number of function evaluations required by Caputo fractional-gradient methods will be investigated in future work.
\end{remark}}
\subsection{Application to Posture Recognition with Neural-Network Architecture}
The proposed CFAGD method is further tested on a real-life neural-network application based on a posture-recognition dataset. Instead of using standard benchmark datasets such as MNIST or CIFAR, we employ this application-driven posture dataset because it contains semantically meaningful classes that are directly relevant to real-world posture recognition\cite{zhao2023target,wang2022reslnet,khan2024robust}. In addition, the dataset exhibits realistic variability, including illumination changes, mild rotations, blur, and intra-class appearance differences, which makes it suitable for studying both generalization and robustness.

The dataset consists of four classes, namely \textit{bending}, \textit{lying}, \textit{sitting}, and \textit{standing}. All images are converted to grayscale, resized to \(64\times 64\), and normalized to the interval \([0,1]\) by dividing pixel values by \(255\). Each image is then vectorized, and the resulting image vectors are standardized before applying principal component analysis (PCA). A stratified train--test split is then used, with test size \(0.2\) and random seed \(0\).

The Caputo fractional derivative has a memory-preserving character because of its integral form, so that the resulting search direction depends not only on the current iterate but also on past optimization information. This is particularly relevant for posture-recognition data, where samples often display structured patterns, gradual variations, and correlated features across classes. In such situations, the memory effect may help smooth the optimization trajectory, reduce oscillations caused by local gradient irregularities, and improve the robustness of the updates. Therefore, the posture dataset provides a suitable testbed for assessing the practical effectiveness of the proposed optimization scheme.

The neural-network study is carried out by comparing the proposed method with CFGD (see~\cite{27}). In the first stage, the methods are compared in terms of training and testing loss, together with training and testing accuracy, during the standard learning process. In the second stage, the same performance measures are evaluated under controlled input perturbations in order to examine the empirical stability of the learned model.

A shallow neural network with one hidden layer is considered. The hidden layer has width \(H=12\) and uses the ReLU activation function, while the output layer uses the softmax function for four-class classification. Let \(x\in\mathbb{R}^d\) denote the PCA feature vector of an input sample, where \(d=30\). Then the hidden representation is given by
\begin{equation*}
a(x)=W_1x+b_1,\quad h(x)=\phi(a(x)),\quad \phi(t)=\max\{0,t\},
\end{equation*}
and the logits are defined by
\begin{equation*}
z(x)=W_2h(x)+b_2.
\end{equation*}
The predicted class label is obtained from the softmax probabilities as
\begin{equation*}
\hat y=\arg\max_{k\in\{1,\dots,K\}} p_k(x),
\end{equation*}
where \(K=4\) is the number of classes.

In the present implementation, both parts of the network are updated within each training epoch. More precisely, the hidden-layer parameters \((W_1,b_1)\) are updated by a  fractional-gradient rule based on Caputo derivatives, whereas the output-layer parameters \((W_2,b_2)\) are updated by the Adam optimizer. Thus, the hidden layer is trained through the fractional search direction and the output layer is trained through Adam. Since the full dataset is large, experiments are done for all and the numerical computation of Caputo fractional derivatives is comparatively expensive, this architecture is computationally demanding. The regularization parameter is fixed at \(\lambda=10^{-4}\), the number of epochs is \(20\), and the subset size used during training is \(512\). Here, several values of $\alpha$ were tested experimentally, and $\alpha=0.9$ was chosen as the best-performing value.

It should be emphasized that the sampled subset is used only for the parameter-update step during each epoch. In contrast, the reported training and testing losses are computed on the full training and full testing partitions after each epoch. If \(\mathcal I_{tr}\) and \(\mathcal I_{te}\) denote the index sets of the training and testing samples, then the loss on any dataset \(\mathcal I\) is evaluated by the regularized cross-entropy objective
\begin{equation*}
\mathcal L_{\mathcal I}(\theta)
=
-\frac{1}{|\mathcal I|}\sum_{i\in\mathcal I}\log p_{y_i}(x_i)
+\frac{\lambda}{2}\Big(\|W_1\|_F^2+\|W_2\|_F^2\Big),
\end{equation*}
where \(\theta=(W_1,b_1,W_2,b_2)\). Accordingly, the training loss is computed on \(\mathcal I_{tr}\), while the testing loss is computed on \(\mathcal I_{te}\).

Similarly, the training and testing accuracies are computed from the predicted labels produced by the current network after each epoch. For any index set \(\mathcal I\), the accuracy is defined by
\begin{equation*}
\mathrm{Acc}(\mathcal I)
=
\frac{1}{|\mathcal I|}\sum_{i\in\mathcal I}\mathbf{1}\{\hat y_i=y_i\},
\end{equation*}
where \(\mathbf{1}\{\hat y_i=y_i\}=1\) if the predicted label matches the true label and \(0\) otherwise. Hence, the training accuracy is obtained from the full training set, and the testing accuracy is obtained from the full test set. Therefore, the reported quantities at each epoch measure the overall performance of the current model on the complete training and testing partitions, rather than only on the sampled subset used for the update.

All experiments were carried out in Python. NumPy was used for numerical computations. scikit-learn was used for data preprocessing, including \texttt{StandardScaler}, \texttt{PCA}, and \texttt{train\_test\_split}. 
 Matplotlib was used for plotting the training curves and the final results. All experiments were performed on a desktop computer with an Intel Core i7-10700 CPU running at 2.90\,GHz, 16\,GB RAM, and Windows 11 Pro (64-bit operating system). The architecture of the network and the corresponding metric-based results are presented in the following tables and figures.
\begin{table}[htbp]
\renewcommand{\arraystretch}{1.15}
\begin{tabular}{|l|l|}
\hline
\textbf{Component} & \textbf{Setting} \\ \hline
Classes & bending, lying, sitting, standing \\ \hline
Image preprocessing & grayscale, \(64\times 64\), normalized to \([0,1]\) \\ \hline
Feature preprocessing & \texttt{StandardScaler} + PCA \\ \hline
PCA dimension (main experiment) & \(30\) \\ \hline
Train--test split & stratified, test size \(0.2\), random seed \(0\) \\ \hline
Network architecture & one hidden layer, width \(12\), ReLU + softmax \\ \hline
Adam stage & \(20\) epochs, learning rate \(10^{-4}\), batch size \(128\) \\ \hline
Regularization & \(\ell_2\), \(\lambda=10^{-4}\) \\ \hline
 Stage & output-layer fine-tuning only, \(20\) epochs, \(3\) inner steps \\ \hline
 Subset size & \(512\) \\ \hline
Fractional-order schedule & \(0.9\) \\ \hline
Hybrid parameter schedule & \(0.25 \to 0.70\) \\ \hline
Step-size rule & Armijo-type inexact line search \\ \hline
Recorded metrics & training accuracy, training loss \\ \hline

\end{tabular}
\centering
\caption{Neural-network experimental architecture for the posture dataset}
\label{tab:nn_protocol}
\end{table}

\begin{table}[ht]

\resizebox{\textwidth}{!}{
\begin{tabular}{c|cccc|cccc}
\hline
& \multicolumn{4}{c|}{\textbf{CFAGD}} & \multicolumn{4}{c}{\textbf{CFGD}} \\
\cline{2-9}
\textbf{Epoch} & \textbf{Train Loss} & \textbf{Test Loss} & \textbf{Train Acc.} & \textbf{Test Acc.} & \textbf{Train Loss} & \textbf{Test Loss} & \textbf{Train Acc.} & \textbf{Test Acc.} \\
\hline
1  & 1.3618 & 1.3626 & 0.589 & 0.592 & 1.3736 & 1.3742 & 0.564 & 0.553 \\
2  & 1.3336 & 1.3348 & 0.605 & 0.606 & 1.3596 & 1.3605 & 0.596 & 0.588 \\
3  & 1.3003 & 1.3027 & 0.621 & 0.620 & 1.3420 & 1.3436 & 0.608 & 0.607 \\
4  & 1.2614 & 1.2648 & 0.626 & 0.624 & 1.3205 & 1.3225 & 0.609 & 0.607 \\
5  & 1.2203 & 1.2252 & 0.627 & 0.630 & 1.2965 & 1.2992 & 0.609 & 0.604 \\
6  & 1.1770 & 1.1823 & 0.628 & 0.628 & 1.2692 & 1.2721 & 0.606 & 0.605 \\
7  & 1.1326 & 1.1399 & 0.634 & 0.635 & 1.2386 & 1.2425 & 0.607 & 0.610 \\
8  & 1.0880 & 1.0970 & 0.630 & 0.637 & 1.2033 & 1.2078 & 0.607 & 0.611 \\
9  & 1.0491 & 1.0605 & 0.631 & 0.634 & 1.1683 & 1.1738 & 0.605 & 0.607 \\
10 & 1.0150 & 1.0288 & 0.634 & 0.634 & 1.1341 & 1.1407 & 0.605 & 0.605 \\
11 & 0.9843 & 1.0008 & 0.638 & 0.635 & 1.1010 & 1.1091 & 0.610 & 0.606 \\
12 & 0.9563 & 0.9743 & 0.645 & 0.639 & 1.0703 & 1.0794 & 0.612 & 0.611 \\
13 & 0.9315 & 0.9524 & 0.650 & 0.647 & 1.0415 & 1.0529 & 0.615 & 0.613 \\
14 & 0.9100 & 0.9320 & 0.654 & 0.648 & 1.0159 & 1.0285 & 0.615 & 0.616 \\
15 & 0.8895 & 0.9140 & 0.658 & 0.656 & 0.9916 & 1.0061 & 0.617 & 0.619 \\
16 & 0.8687 & 0.8955 & 0.664 & 0.665 & 0.9691 & 0.9858 & 0.623 & 0.625 \\
17 & 0.8514 & 0.8797 & 0.671 & 0.669 & 0.9494 & 0.9680 & 0.626 & 0.633 \\
18 & 0.8347 & 0.8628 & 0.675 & 0.674 & 0.9316 & 0.9509 & 0.629 & 0.633 \\
19 & 0.8185 & 0.8466 & 0.680 & 0.672 & 0.9146 & 0.9346 & 0.631 & 0.636 \\
20 & 0.8047 & 0.8327 & 0.684 & 0.677 & 0.8993 & 0.9201 & 0.634 & 0.636 \\
\hline
\end{tabular}
}
\centering
\caption{Epoch-wise comparison of CFAGD and CFGD on the posture dataset.}
\label{tab3}
\end{table}
\begin{figure}[H]
\centering
\begin{subfigure}{0.45\textwidth}
    \centering
    \includegraphics[width=\linewidth]{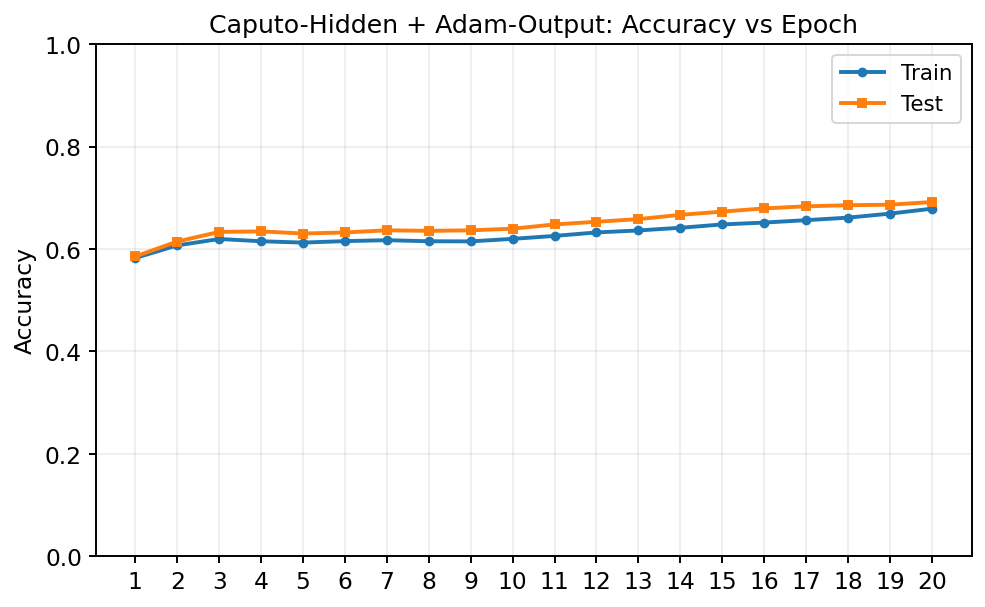}  
\end{subfigure}
\hfill
\begin{subfigure}{0.45\textwidth}
    \centering
    \includegraphics[width=\linewidth]{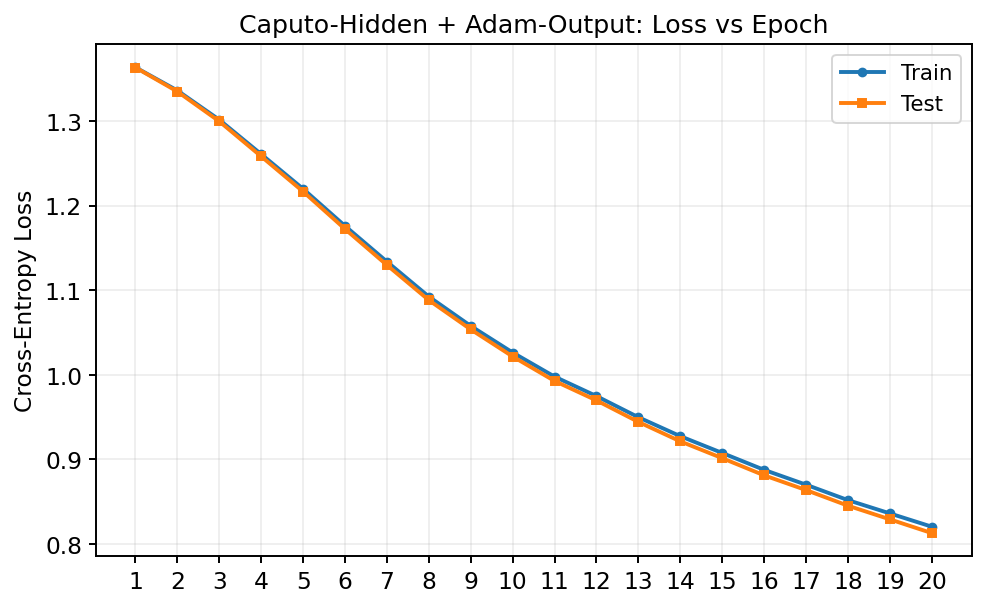}  
\end{subfigure}
\caption{Training and testing performance of CFAGD in terms of accuracy and loss.}

\begin{subfigure}{0.45\textwidth}
    \centering
    \includegraphics[width=\linewidth]{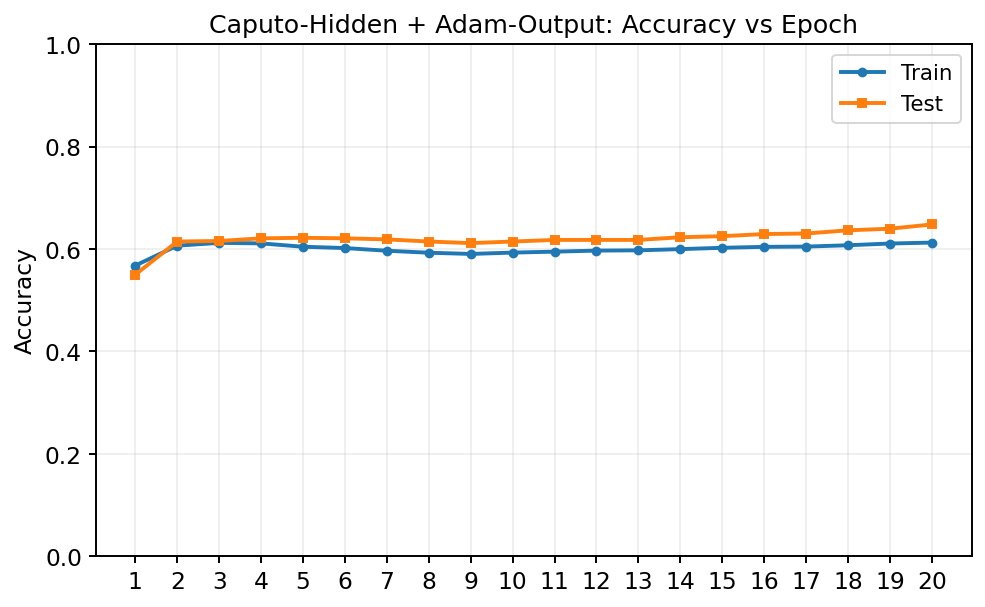}  
\end{subfigure}
\hfill
\begin{subfigure}{0.45\textwidth}
    \centering
    \includegraphics[width=\linewidth]{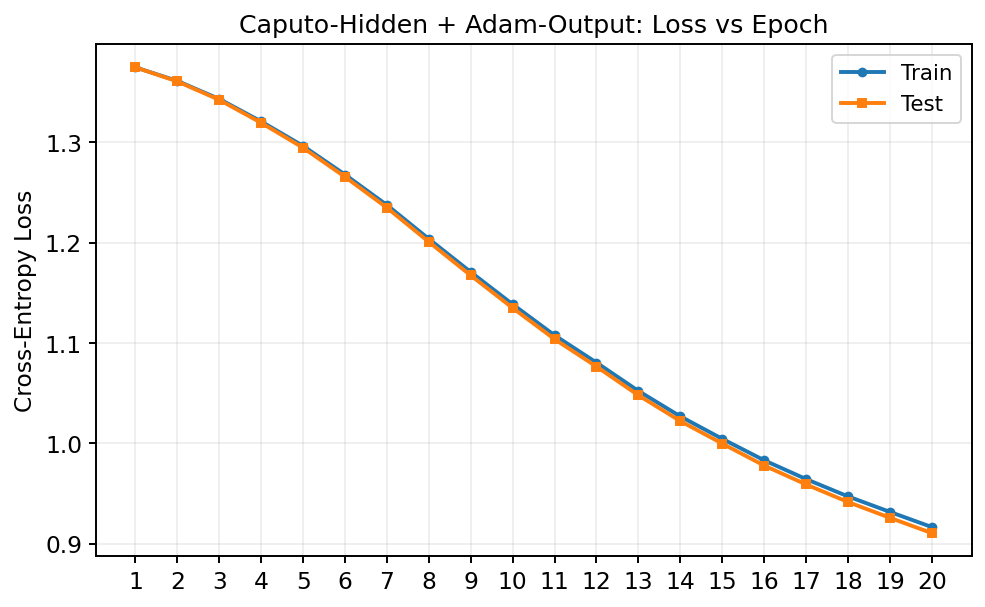}  \end{subfigure}
\caption{Training and testing performance of CFGD in terms of accuracy and loss.}
\label{fig3}
\end{figure}
Let $\hat{y}_i^{(\mathrm{orig})}$ denote the predicted label for the original test image and
$\hat{y}_i^{(\mathrm{pert})}$ denote the predicted label for its perturbed version (under a fixed perturbation type).
Prediction stability is quantified by the \emph{flip rate}:
$$
\mathrm{Flip\ rate}=\frac{1}{|\mathcal{I}_{te}|}\sum_{i\in\mathcal{I}_{te}}
\mathbf{1}\!\left\{\hat{y}_i^{(\mathrm{pert})}\neq \hat{y}_i^{(\mathrm{orig})} \right\}.
$$
\subsubsection*{Perturbation stability and flip rate:}

To assess robustness, the stability of predictions under controlled perturbations of the test images is evaluated.
For each test image, perturbed variants are generated using the following transformations and parameter ranges:
\begin{itemize}
\item \textbf{Brightness scaling:} intensities are multiplied by $\gamma_b\sim\mathcal{U}[0.85,1.15]$.
\item \textbf{Contrast scaling:} a contrast factor $\gamma_c\sim\mathcal{U}[0.85,1.15]$ is applied.
\item \textbf{Rotation:} an angle $\theta\sim\mathcal{U}[-7^\circ,7^\circ]$ is used for rotation.
\item \textbf{Gaussian blur:} a blur radius $r\sim\mathcal{U}[0.2,1.0]$ is applied.
\item \textbf{Additive Gaussian noise:} $\epsilon\sim\mathcal{N}(0,\sigma^2)$ is added with $\sigma\sim\mathcal{U}[0.01,0.03]$.
\item \textbf{Spatial shift:} a translation by $(\Delta x,\Delta y)$ is applied, where $\Delta x,\Delta y\in\{-4,-3,\dots,4\}$.
\end{itemize}

A lower flip rate indicates higher robustness, i.e., the model's prediction is less sensitive to the corresponding input perturbation. Table~\ref{tab:fliprate_alpha_methods} reports the flip rates under different perturbations for , ADAM, CFAGD and CFGD, and Figure~\ref{fig4} shows the corresponding barcode plots.
\begin{table}[H]

\renewcommand{\arraystretch}{0.78}
\setlength{\tabcolsep}{1.4pt}

\begin{adjustbox}{max totalsize={\textwidth}{0.88\textheight},center}
\begin{tabular}{l c l c c c c c}
\toprule
\multirow{2}{*}{Method} & \multirow{2}{*}{$\alpha$} & \multirow{2}{*}{Perturbation} &
\multicolumn{5}{c}{Flip Rate} \\
\cmidrule(lr){4-8}
 & & & Overall & Bending & Lying & Sitting & Standing \\
\midrule

\multirow{6}{*}{\text{ADAM+ADAM}} 
& \multirow{6}{*}{1.00} & Brightness & 0.2865 & 0.2708 & 0.1042 & 0.3875 & 0.3833 \\
&  & Contrast   & 0.0865 & 0.0667 & 0.0375 & 0.1458 & 0.0958 \\
&  & Rotate     & 0.5292 & 0.4375 & 0.3708 & 0.5708 & 0.7375 \\
&  & Blur       & 0.0010 & 0.0000 & 0.0000 & 0.0042 & 0.0000 \\
&  & Noise      & 0.1104 & 0.1083 & 0.0375 & 0.2208 & 0.0750 \\
&  & Shift      & 0.5979 & 0.4667 & 0.3417 & 0.7042 & 0.8792 \\

\midrule
\multirow{24}{*}{\text{CFAGD+ADAM}} 
& \multirow{6}{*}{0.60} & Brightness & 0.1594 & 0.2083 & 0.0375 & 0.2208 & 0.1708 \\
&  & Contrast   & 0.0490 & 0.0583 & 0.0125 & 0.1000 & 0.0250 \\
&  & Rotate     & 0.2885 & 0.4542 & 0.0417 & 0.3583 & 0.3000 \\
&  & Blur       & 0.0031 & 0.0042 & 0.0000 & 0.0042 & 0.0042 \\
&  & Noise      & 0.0573 & 0.0750 & 0.0292 & 0.0958 & 0.0292 \\
&  & Shift      & 0.5240 & 0.6917 & 0.0875 & 0.6042 & 0.7125 \\

\cmidrule(lr){2-8}
& \multirow{6}{*}{0.70} & Brightness & 0.1594 & 0.2083 & 0.0375 & 0.2208 & 0.1708 \\
&  & Contrast   & 0.0490 & 0.0583 & 0.0125 & 0.1000 & 0.0250 \\
&  & Rotate     & 0.2875 & 0.4458 & 0.0417 & 0.3625 & 0.3000 \\
&  & Blur       & 0.0042 & 0.0083 & 0.0000 & 0.0042 & 0.0042 \\
&  & Noise      & 0.0573 & 0.0750 & 0.0292 & 0.0958 & 0.0292 \\
&  & Shift      & 0.5250 & 0.6917 & 0.0875 & 0.6083 & 0.7125 \\

\cmidrule(lr){2-8}
& \multirow{6}{*}{0.80} & Brightness & 0.1604 & 0.2083 & 0.0375 & 0.2250 & 0.1708 \\
&  & Contrast   & 0.0490 & 0.0583 & 0.0125 & 0.1000 & 0.0250 \\
&  & Rotate     & 0.2875 & 0.4500 & 0.0417 & 0.3583 & 0.3000 \\
&  & Blur       & 0.0042 & 0.0083 & 0.0000 & 0.0042 & 0.0042 \\
&  & Noise      & 0.0563 & 0.0750 & 0.0292 & 0.0917 & 0.0292 \\
&  & Shift      & 0.5229 & 0.6917 & 0.0875 & 0.6042 & 0.7083 \\

\cmidrule(lr){2-8}
& \multirow{6}{*}{0.90} & Brightness & 0.1573 & 0.2042 & 0.0375 & 0.2208 & 0.1667 \\
&  & Contrast   & 0.0500 & 0.0625 & 0.0125 & 0.1000 & 0.0250 \\
&  & Rotate     & 0.2865 & 0.4417 & 0.0417 & 0.3625 & 0.3000 \\
&  & Blur       & 0.0031 & 0.0042 & 0.0000 & 0.0042 & 0.0042 \\
&  & Noise      & 0.0531 & 0.0667 & 0.0292 & 0.0875 & 0.0292 \\
&  & Shift      & 0.5240 & 0.6917 & 0.0875 & 0.6083 & 0.7083 \\

\midrule
\multirow{24}{*}{\text{CFGD+ADAM}} 
& \multirow{6}{*}{0.60} & Brightness & 0.1615 & 0.1875 & 0.0208 & 0.2208 & 0.2167 \\
&  & Contrast   & 0.0542 & 0.0875 & 0.0083 & 0.0792 & 0.0417 \\
&  & Rotate     & 0.3396 & 0.3875 & 0.0375 & 0.4500 & 0.4833 \\
&  & Blur       & 0.0021 & 0.0083 & 0.0000 & 0.0000 & 0.0000 \\
&  & Noise      & 0.0698 & 0.1208 & 0.0167 & 0.1042 & 0.0375 \\
&  & Shift      & 0.5250 & 0.5875 & 0.0875 & 0.6500 & 0.7750 \\

\cmidrule(lr){2-8}
& \multirow{6}{*}{0.70} & Brightness & 0.1604 & 0.1833 & 0.0208 & 0.2208 & 0.2167 \\
&  & Contrast   & 0.0552 & 0.0917 & 0.0083 & 0.0792 & 0.0417 \\
&  & Rotate     & 0.3406 & 0.3917 & 0.0375 & 0.4500 & 0.4833 \\
&  & Blur       & 0.0010 & 0.0042 & 0.0000 & 0.0000 & 0.0000 \\
&  & Noise      & 0.0677 & 0.1125 & 0.0167 & 0.1042 & 0.0375 \\
&  & Shift      & 0.5250 & 0.5875 & 0.0875 & 0.6500 & 0.7750 \\

\cmidrule(lr){2-8}
& \multirow{6}{*}{0.80} & Brightness & 0.1604 & 0.1833 & 0.0208 & 0.2167 & 0.2208 \\
&  & Contrast   & 0.0552 & 0.0917 & 0.0083 & 0.0792 & 0.0417 \\
&  & Rotate     & 0.3385 & 0.3875 & 0.0375 & 0.4458 & 0.4833 \\
&  & Blur       & 0.0010 & 0.0042 & 0.0000 & 0.0000 & 0.0000 \\
&  & Noise      & 0.0677 & 0.1125 & 0.0167 & 0.1042 & 0.0375 \\
&  & Shift      & 0.5250 & 0.5875 & 0.0875 & 0.6500 & 0.7750 \\

\cmidrule(lr){2-8}
& \multirow{6}{*}{0.90} & Brightness & 0.1615 & 0.1875 & 0.0208 & 0.2167 & 0.2208 \\
&  & Contrast   & 0.0552 & 0.0917 & 0.0083 & 0.0792 & 0.0417 \\
&  & Rotate     & 0.3385 & 0.3875 & 0.0375 & 0.4458 & 0.4833 \\
&  & Blur       & 0.0010 & 0.0042 & 0.0000 & 0.0000 & 0.0000 \\
&  & Noise      & 0.0677 & 0.1125 & 0.0167 & 0.1042 & 0.0375 \\
&  & Shift      & 0.5250 & 0.5833 & 0.0875 & 0.6500 & 0.7792 \\

\bottomrule

\end{tabular}

\end{adjustbox}
\centering
\caption{Flip rate under different perturbations for ADAM, CFAGD, and CFGD.}
\label{tab:fliprate_alpha_methods}
\end{table}

\begin{figure}[H]
\centering
\begin{subfigure}{0.33\textwidth}
    \centering
    \includegraphics[width=\linewidth]{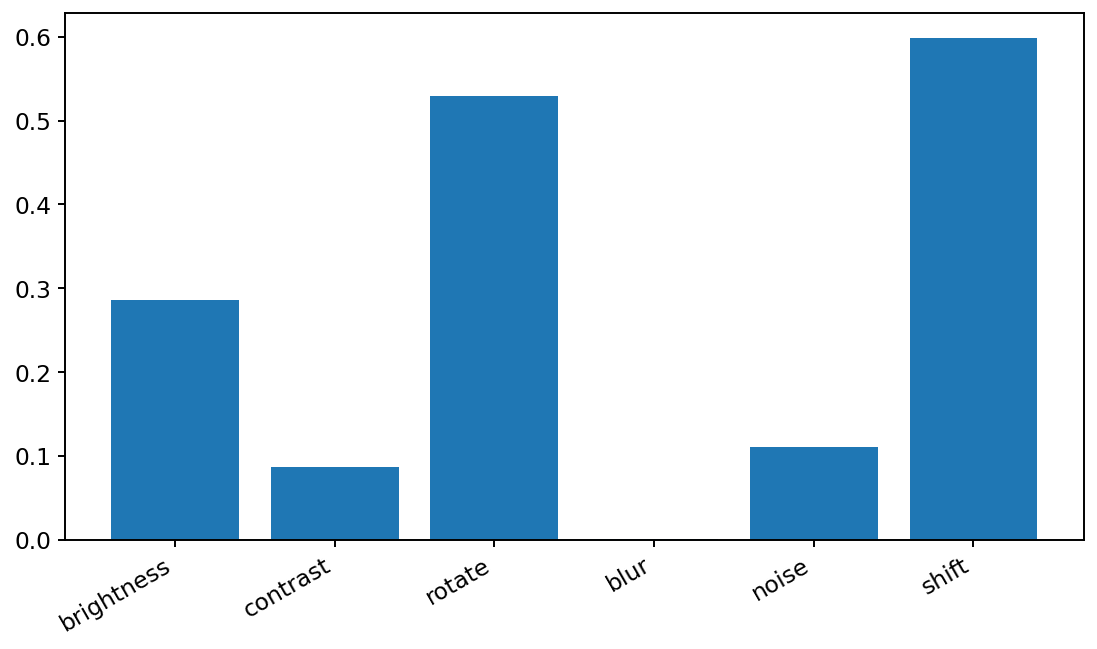}
   
\end{subfigure}
\hfill
\begin{subfigure}{0.33\textwidth}
    \centering
    \includegraphics[width=\linewidth]{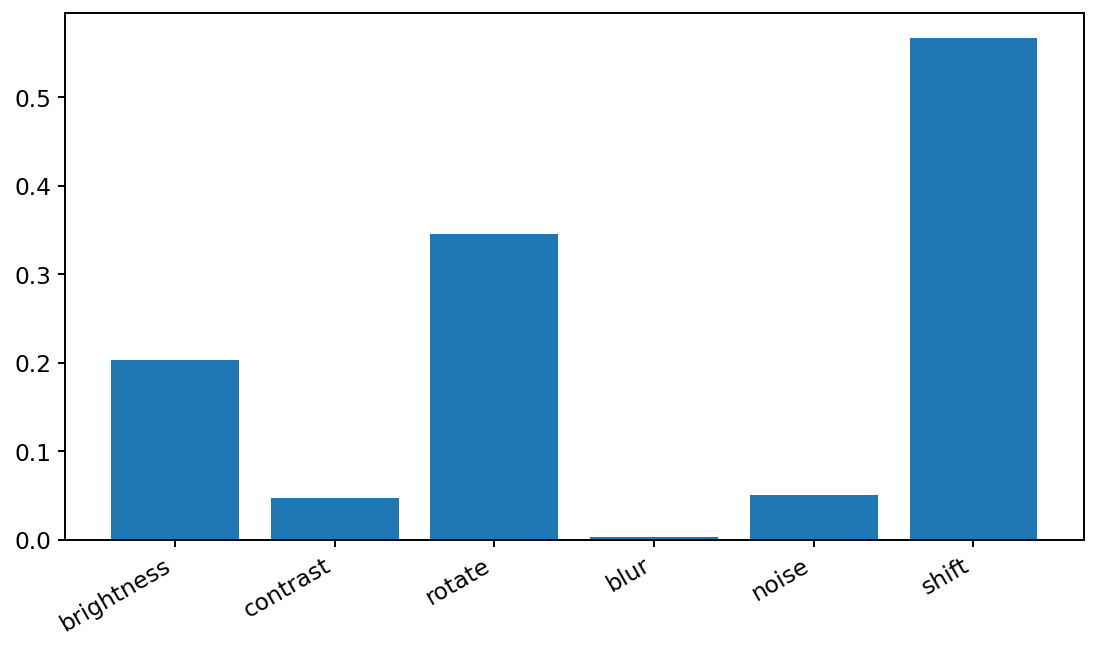}

\end{subfigure}
\hfill
\begin{subfigure}{0.33\textwidth}
    \centering
    \includegraphics[width=\linewidth]{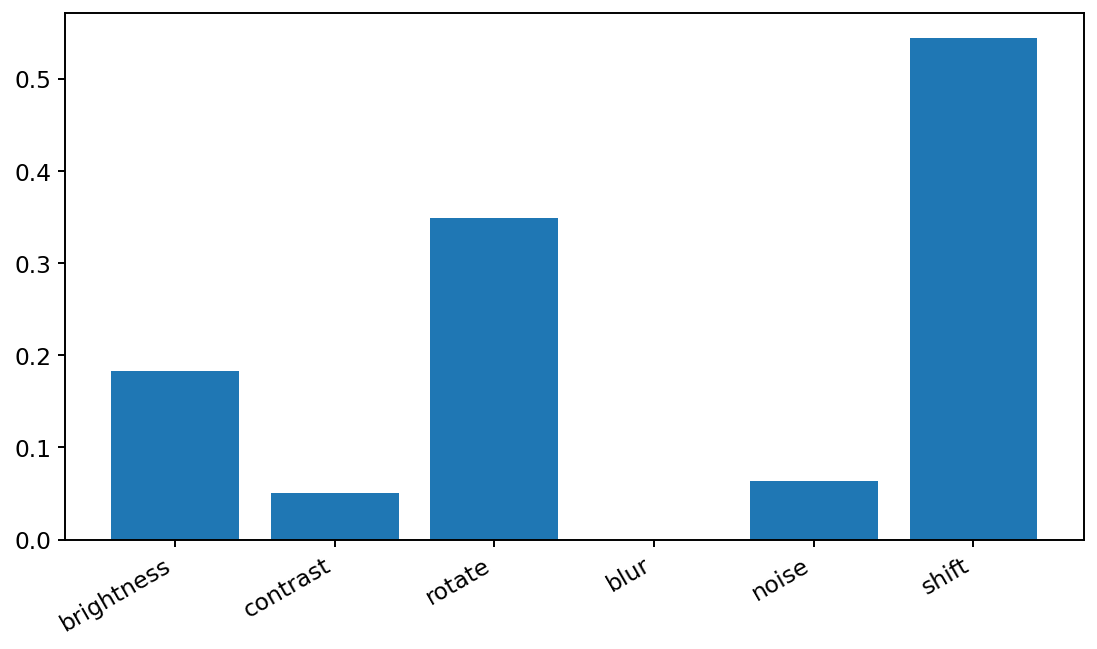}
   
\end{subfigure}

\caption{Barcode of flip rate of ADAM, CFAGD and CFGD.}
\label{fig4}
\end{figure}
 {The present experiment clearly demonstrates the positive effect of the proposed CFAGD+ADAM framework for the posture-recognition problem. Compared with the classical ADAM+ADAM model, the incorporation of Caputo fractional-gradient information improves both the learning behavior and the stability of the neural-network model. Although ADAM+ADAM is able to train the model reasonably well, its perturbation results show higher sensitivity to input distortions, particularly under brightness, rotation, noise, and spatial shift. In contrast, CFAGD+ADAM produces lower flip rates in most important perturbation cases, showing that the fractional-memory effect helps the model maintain more consistent predictions.
The advantage of CFAGD+ADAM is especially visible for $(\alpha=0.90)$. For this value, the overall flip rate decreases from 0.2865 for ADAM+ADAM to 0.1573 under brightness perturbation, from 0.5292 to 0.2865 under rotation, and from 0.1104 to 0.0531 under noise. These improvements indicate that the accelerated Caputo fractional update contributes to more stable hidden-layer representations. The memory effect of the fractional derivative helps to use past-gradient information in a more effective way, while the acceleration mechanism improves the practical behavior of the training process.
CFAGD+ADAM also performs better than CFGD+ADAM in several important cases. For $(\alpha=0.90)$, CFAGD+ADAM gives smaller overall flip rates than CFGD+ADAM under brightness, contrast, rotation, and noise perturbations. In particular, the rotation flip rate is reduced from 0.3385 for CFGD+ADAM to 0.2865 for CFAGD+ADAM, and the noise flip rate is reduced from 0.0677 to 0.0531. This confirms that the accelerated fractional strategy is more effective than the basic fractional-gradient method.
Among the tested fractional orders, $(\alpha=0.90)$ gives a good and representative performance for CFAGD+ADAM. It provides low flip rates for brightness, rotation, and noise perturbations and gives stable classification behavior. Therefore, $(\alpha=0.90)$ can be regarded as a suitable fractional order for the proposed CFAGD+ADAM model in this experiment. This value appears to balance the classical first-order gradient information with the memory effect of the Caputo fractional derivative.\\
\hspace*{0.5cm}Overall, the results support the effectiveness of CFAGD+ADAM for neural-network training on real-life posture data. The method improves robustness compared with ADAM+ADAM and also shows better stability than CFGD+ADAM in most perturbation settings. These observations indicate that the proposed accelerated Caputo fractional-gradient framework can be a useful optimization strategy for practical machine-learning problems where both accuracy and stability are important.
}

\section{Conclusion}\label{sec6}
This study proposed a Caputo fractional accelerated gradient descent method for unconstrained optimization problems, considering both smooth strongly convex problems and certain non-smooth settings. Motivated by the slow convergence and step-size sensitivity of classical sub-gradient methods, the proposed CFAGD method was developed using a Caputo fractional Taylor-series-based search direction. The existence of a descent direction and a suitable step-size was established, and the convergence behavior was studied under the stated assumptions.
The numerical experiments show that CFAGD performs effectively compared with classical sub-gradient methods and CFGD under the same setting. In particular, CFAGD often achieves better numerical performance and smaller distances to the reference stationary point computed by ProxGD. The neural-network example also demonstrates the practical effectiveness of the proposed method under perturbations.  {These results indicate that the memory-preserving scaling effect of the Caputo fractional derivative can improve the search behavior and support stable training in nonlinear optimization problems.}
 {In future work, priority will be given to reducing CPU time and improving step-size strategies. Further theoretical analysis, parameter-selection rules of statistical approaches, constrained optimization and larger practical applications will also be considered. Moreover, recent studies on spatio-temporal attention networks suggest that CFAGD could be particularly useful for optimizing sequential and temporal data models in video analysis \cite{wang2023tasta}.}
\section*{Declaration}
\begin{itemize}
\item{\bf Acknowledgement:} The authors would like to thank the reviewers
for their detailed comments and valuable suggestions, which have
significantly improved both the content and the presentation of the
results in this paper.
    \item {\bf Funding information:} Author Barsha Shaw appreciates the financial support received from the University Grants Commission (UGC) Fellowship (ID No. 211610020050), Government of India, which supported her Ph.D. work.
    \item {\bf Ethics approval and consent to participate:} Not applicable.
\item {\bf Consent for publication:} The authors declare no consent for publication.
\item {\bf Data availability statement:} The authors have used the posture project from \url{kaggle.com}.
\item {\bf Materials availability:} The simulation codes are available upon reasonable request from the corresponding author.
\item {\bf Competing interest:} The authors declare no conflict of interest.
\end{itemize}

\bibliographystyle{elsarticle-num}
\bibliography{biblo}
 \end{document}